\documentclass[12pt]{article}
\usepackage{graphicx}
\newcommand{\bt}{\begin{tikzcd}}
\newcommand{\et}{\end{tikzcd}}
\usepackage[numbered,framed]{matlab-prettifier}
\usepackage{enumitem}
\usepackage{comment}
\usepackage{amsthm}
\usepackage{amsmath, amscd}
\usepackage{tgtermes}
\theoremstyle{definition}
\usepackage{array}
\makeatletter
\newcommand{\thickhline}{\noalign {\ifnum 0=`}\fi \hrule height 1pt \futurelet \reserved@a \@xhline}
\newcolumntype{"}{@{\hskip\tabcolsep\vrule width 1pt\hskip\tabcolsep}}
\makeatother
\usepackage{hyperref}
\usepackage{tikz-cd}
\usepackage{xurl}
\usepackage{float}
\usepackage{amssymb}
\usepackage{mathtools}
\usepackage{mathrsfs}
\usepackage{calligra}
\usepackage[left=3cm,right=3cm, top = 3cm, bottom = 3cm]{geometry}
\usepackage{bbm}

\newcommand{\M}{\mathcal{M}}
\newcommand{\A}{\mathcal{A}}
\newcommand{\C}{\mathbb{C}} 
\newcommand{\Z}{\mathbb{Z}} 
\newcommand{\pr}{\mathbb{P}}

\newcommand{\Mct}{\M^{ct}}

\newcommand{\ul}{\underline} 
\newcommand{\ol}{\overline}

\newcommand{\HH}{\mathbb{H}}
\tikzset{labl/.style={anchor=south, rotate=-90, inner sep=.5mm}}
\newtheorem{theorem}{Theorem}[section]
\newtheorem{lemma}[theorem]{Lemma}
\newtheorem{proposition}[theorem]{Proposition}

\newtheorem{definition}[theorem]{Definition}
\newtheorem{construction}[theorem]{Construction}

\newtheorem{remark}[theorem]{Remark}
\newtheorem{example}[theorem]{Example}

\usepackage{epigraph}
\usepackage{hyperref}
\urlstyle{same}
\usepackage{appendix}
\usepackage{caption}
\usepackage{mathtools}

\newcommand{\thistheoremname}{}
\newtheorem*{genericthm*}{\thistheoremname}
\newenvironment{namedtheorem*}[1]
  {\renewcommand{\thistheoremname}{#1}
   \begin{genericthm*}}
  {\end{genericthm*}}

\title{The fiber product of the Torelli map with any product $\A_{g_1}\times \dots \times \A_{g_k}\to\A_g$ is reduced}
\author{Lycka Drakengren}
\date{}

\begin{document}

\maketitle

\begin{abstract}
    We prove that the fiber product of the Torelli map $t\colon \Mct_g \to \A_g$ with any product $\A_{g_1}\times\dots\times \A_{g_k} \to \A_g$ for $g=g_1+\dots+g_k$ has a reduced scheme structure. As a consequence, letting $d=\text{codim}(t^*[\A_{g_1}\times\dots\times \A_{g_k}])$, we find that the class $t^*[\A_{g_1}\times\dots\times \A_{g_k}]\in \mathsf{CH}^{d}(\Mct_g)$ is tautological. In particular, we obtain ${t^*[\A_{g_1}\times\dots\times \A_{g_k}] = 0}$ for $d > 2g-3.$
\end{abstract}

\section{Introduction}

\subsection{Moduli of decomposable abelian varieties and the Torelli map}

The moduli space $\A_g$ of \textit{principally polarized varieties} of dimension $g$ is a nonsingular Deligne-Mumford stack obtained over $\mathbb{C}$ as an orbifold quotient $$\A_g = [\text{Sp}(2g,\Z)\backslash\HH_g],$$
\noindent where $\HH_g$, the \textit{Siegel upper half-space}, is the space of matrices $$\HH_g = \{\Pi \in M_{g\times g}(\C)| \Pi^T = \Pi, \text{Im}(\Pi) >0\}.$$

For any $g=g_1+\dots+g_k$, we have a product map \begin{equation*}
\begin{aligned}
&\pi\colon  \A_{g_1}\times\dots\times \A_{g_k} \to \A_g\\
&(X_1,\dots,X_k) \mapsto X_1\times \dots \times X_k.
\end{aligned}
\end{equation*}

\noindent Pushing forward the fundamental class of $\A_{g_1}\times \dots \times \A_{g_k}$ under $\pi$, we obtain a cycle $$[\A_{g_1}\times \dots \times \A_{g_k}]\in \mathsf{CH}^*(\A_g).$$

The intersection theory of $\A_g$ can be studied using the moduli space $\Mct_g$ of \textit{compact type curves}, parametrizing stable curves $C$ whose dual graph $\Gamma_C$ is a tree. Compact type curves form an open substack $\Mct_g\subset \overline{\M}_g$ of the moduli space of stable curves of genus $g$, here considered over $\C$. The principally polarized \textit{Jacobian} $J(C)$ of a compact type curve $C$ parametrizes line bundles of multidegree $0$, where the polarization is given by the theta divisor. For $[C]\in \overline{\M}_g$, the Jacobian $J(C)$ is proper precisely when $[C]\in \Mct_g$. The \textit{Torelli map} is the morphism
\begin{equation*}
\begin{aligned}
t\colon &\Mct_g \to \A_g\\
&[C] \mapsto [J(C)],
\end{aligned}
\end{equation*}
whose pullback $t^*\colon \mathsf{CH}^*(\A_g) \to \mathsf{CH}^{*}(\Mct_g)$ gives us a tool for investigating relations in $\mathsf{CH}^*(\A_g)$.

To study the pullback $t^*[\A_{g_1}\times\dots \times \A_{g_k}]$ using excess intersection theory \cite{Ful}, we need to understand the precise scheme structure for the fiber product \begin{equation*}
\begin{aligned}
\bt Y \ar[r]\ar[d]& \A_{g_1}\times\dots\times \A_{g_k} \ar[d,"\pi"]\\
\Mct_g\ar[r,"t"]&\A_g. \et 
\end{aligned}
\end{equation*}

\noindent To study the local structure of $Y$, we will find it useful to consider a lift of the product map to $\HH_g$ given by
\begin{equation*}
\begin{aligned}
&\pi\colon  \HH_{g_1}\times\dots\times \HH_{g_k} \to \HH_g\\
&(\Pi_1,\dots, \Pi_k)\mapsto 
\begin{pmatrix}
    \Pi_1 &  0 & 0\\
0  & \ddots & 0 \\
0   & 0 & \Pi_k
\end{pmatrix}_.
\end{aligned}
\end{equation*}

\noindent Locally, we can also lift the Torelli map to $\HH_g$ as follows: For $[C]\in \Mct_g$, choose a symplectic basis $\{A_i,B_i\}_{1\leq i \leq g}$ for $H_1(C,\Z)$ and a basis $\{\omega_i\}_{1\leq i \leq g}$ for $H^0(\omega_C)$ normalized by ${\int_{A_j}\omega_i = \delta_{i,j}}$. The period matrix $$\Pi_{i,j} = \int_{B_j}\omega_i$$ gives a lift of $[J(C)]$ to $\HH_g$ \cite[\S 11.1]{BL}.

\subsection{Tautological classes and loci of decomposable abelian varieties}
Let $\bt \pi \colon \mathcal{X}_g \to \A_g \et$ be the universal abelian variety and $\mathbb{E} = \pi_*\Omega_\pi$ the \textit{Hodge bundle} on $\A_g$. The Chern classes $\lambda_i = c_i(\mathbb{E})$ generate a subring $R^*(\A_g) \subset \mathsf{CH}^*(\A_g)$ referred to as the \textit{tautological ring} \cite{vdg}. For $\Mct_g$, we instead define the tautological ring $R^*(\Mct_g)$ to be the $\mathbb{Q}$-subalgebra of $\mathsf{CH}^{*}(\Mct_g)$ generated by pushforwards from boundary strata of products of kappa- and psi classes \cite{GP}. In both cases, elements of the tautological ring are referred to as \textit{tautological classes}. Given a cycle in $\mathsf{CH}^*(\A_g)$, it is natural to ask whether or not it is tautological. For $\alpha \in \mathsf{CH}^k(\A_g)$, the \textit{tautological projection} $\text{taut}(\alpha)$ is the unique class in $R^k(\A_g)$ satisfying $$\int_{\overline{\A}_g}\alpha \cdot\beta\cdot\lambda_g = \int_{\overline{\A}_g}\text{taut}(\alpha) \cdot\beta\cdot\lambda_g \hspace{2pt}\text{ for all }\hspace{2pt} \beta\in \mathsf{R}^{\binom{g}{2}-k}(\A_g),$$ where $\ol{\A}_g$ is any choice of toroidal compactification of $\A_g$ \cite{CMOP}. To prove that $\alpha \not \in R^*(\A_g)$, it is enough to show that $t^*\alpha \neq t^*\text{taut}(\alpha)$ in $\mathsf{CH}^{*}(\Mct_g)$.

The classes $[\A_1\times \A_{g-1}]\in \mathsf{CH}^{g-1}(\A_g)$ have been studied in \cite{COP} and \cite{NL}. In \cite{NL}, Iribar López shows that $[\A_1\times \A_{g-1}]$ is nontautological for $g=12$ or $g\geq 16$ with $g$ even. In \cite{COP}, Canning, Oprea and Pandharipande compute the Torelli pullback $t^*[\A_1\times \A_{g-1}]$ using excess intersection theory on the fiber product of $t\colon \Mct_g\to \A_g$ and $\pi\colon \A_1\times \A_{g-1} \to \A_g$. This method uses the fact that the fiber product is reduced, which is proven in \cite{COP} using deformation theory of stable maps. By computing the class $t^*[\A_1\times \A_5]$ and comparing it with ${t^*\text{taut}([\A_1\times \A_5])}$, they are able to show that $[\A_1\times \A_5]\in \mathsf{CH}^5(\A_6)$ is nontautological. 

Our main result of this paper, Theorem \ref{mainthm}, opens up the possibility of calculating the classes $t^*[\A_{g_1}\times\dots\times \A_{g_k}]$ for any $g=g_1+\dots + g_k$. In \cite{inprep}, we give a recursive formula for computing these classes by adapting methods from \cite{COP}. The fiber product $Y$ consists of several intersecting components whose dimensions can exceed $\text{dim}(t^*[\A_{g_1}\times\dots\times \A_{g_k}])$. The first method presented in \cite{inprep} for calculating $t^*[\A_{g_1}\times\dots\times \A_{g_k}]$ involves stratifying $Y$, determining the normal bundles associated to containments of closed strata and using suitable local models to decompose the class $t^*[\A_{g_1}\times\dots\times \A_{g_k}]$ into canonical contributions from the closed strata.

\begin{theorem}\label{mainthm}
    The fiber product of the Torelli map $t\colon \Mct_g \to \A_g$ with the product map $\pi\colon  \A_{g_1}\times\dots\times \A_{g_k} \to \A_g$ for $g=g_1+\dots+g_k$ is reduced.
\end{theorem}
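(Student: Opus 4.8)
The plan is to verify reducedness formally at every point, reducing to an explicit computation of the local ideal of $Y$ in period coordinates. Reducedness is \'etale-local and, since we work over $\C$, is preserved under the quotient by the (finite) automorphism group of a curve; it therefore suffices to show that the local model coming from the universal deformation is reduced at each $[C_0]\in\Mct_g$ lying in $Y$. Fix such a point, with dual tree $\Gamma_0$, irreducible components $C_v$ of genera $g_v$, and nodes indexed by the edges $e$ of $\Gamma_0$. The local ring of $\Mct_g$ at $[C_0]$ is a power series ring in the smoothing parameters $t_e$ together with the moduli $m$ of the pointed normalizations $C_v$. Along this chart the Torelli map is recorded by a holomorphic period matrix $\Pi=\Pi(t,m)\in\HH_g$ which at $t=0$ is block-diagonal with blocks indexed by the positive-genus components, reflecting $J(C_0)=\prod_v J(C_v)$. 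A point of $Y$ over $[C_0]$ records a grouping of the positive-genus vertices into $V_1,\dots,V_k$ with $\sum_{v\in V_i}g_v=g_i$, and the product locus $\prod_i\A_{g_i}\hookrightarrow\A_g$ is cut out near $[J(C_0)]$ by the vanishing of the off-block entries of $\Pi$. Pulling back, the local ideal $I$ of $Y$ is generated by the entries of the blocks $\Pi_{uv}$ ranging over all pairs $(u,v)$ of positive-genus components lying in different groups.

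First I would establish a divisibility statement: for such a cross pair $(u,v)$, the block $\Pi_{uv}$ is divisible by $\prod_{e\in P(u,v)}t_e$, where $P(u,v)$ is the unique path from $u$ to $v$ in $\Gamma_0$. Indeed, whenever an edge $e\in P(u,v)$ is left unsmoothed ($t_e=0$), the partially smoothed curve is still of compact type and $u,v$ lie in different clusters; since the Jacobian is then a product over clusters, $\Pi_{uv}$ vanishes identically on $\{t_e=0\}$. As the $t_e$ are independent coordinates, $\Pi_{uv}$ is divisible by each such $t_e$, hence by their product, and we may write $\Pi_{uv}=\bigl(\prod_{e\in P(u,v)}t_e\bigr)\Phi_{uv}$ for a holomorphic matrix $\Phi_{uv}$.

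The crux is to show that $\Phi_{uv}(0)\neq 0$, so that some entry of $\Phi_{uv}$ is a unit and the ideal generated by the entries of $\Pi_{uv}$ is exactly the principal monomial ideal $\bigl(\prod_{e\in P(u,v)}t_e\bigr)$. Here I would invoke the first-order degeneration (plumbing) formula for period matrices: the coefficient of $\prod_{e\in P(u,v)}t_e$ in $\Pi_{uv}$ is the rank-one matrix $\mathbf{u}\,\mathbf{v}^{\,T}$, up to nonzero propagation constants through any intermediate genus-zero vertices on $P(u,v)$, where $\mathbf{u},\mathbf{v}$ are the vectors of values at the relevant nodes of the normalized differentials of $C_u,C_v$. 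Since $g_u,g_v\geq 1$ and the canonical system of a curve of genus $\geq 1$ is base-point free, $\mathbf{u}\neq 0$ and $\mathbf{v}\neq 0$, so $\Phi_{uv}(0)\neq 0$. I expect verifying this leading coefficient --- in particular controlling propagation through chains of rational bridges in the multi-parameter smoothing --- to be the main technical obstacle, and the step where the degeneration theory of \cite{BL} (or Fay's plumbing analysis) must be brought to bear most carefully.

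Granting the two steps above, $I=\bigl(\prod_{e\in P(u,v)}t_e : (u,v)\ \text{a positive-genus cross pair}\bigr)$ is generated by squarefree monomials in the $t_e$, squarefree because a path in a tree traverses each edge at most once. Such a monomial ideal is radical, and remains so after the flat base change adjoining the component moduli $m$; hence the local ring $\Ocal_{Y,[C_0]}$ is reduced. As $[C_0]$ was arbitrary, $Y$ is reduced. Geometrically this exhibits $Y$, locally, as the reduced union of the coordinate boundary strata obtained by setting to zero the smoothing parameters of the edges separating the groups, which also explains why the components of $Y$ have varying, and generally excess, dimension.
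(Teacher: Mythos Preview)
Your strategy matches the paper's almost exactly: localize, express the Torelli map via a period matrix, show that the cross-blocks $\Pi_{uv}$ are divisible by the path monomials and that some entry has unit leading coefficient, and conclude that the local ideal is squarefree monomial. Your geometric argument for divisibility (vanishing on each hyperplane $\{t_e=0\}$ because the Jacobian then splits with $u,v$ in different factors) is in fact cleaner than the paper's, which extracts divisibility from the same explicit Hu--Norton expansion it uses for the unit step.

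The gap is in the unit step. The assertion $\Phi_{uv}(0)\neq 0$ is not true for an arbitrary cross pair. The coefficient of $\prod_{e\in P(u,v)}t_e$ in $\Pi_{uv}$ is, up to the endpoint vectors $\mathbf u,\mathbf v$, a product over the \emph{interior} vertices $w$ of $P(u,v)$ of the evaluation of the $A$-normalized fundamental bidifferential of $C_w$ at the two adjacent nodes; for $g(w)\geq 1$ this evaluation can vanish, so your phrase ``nonzero propagation constants through any intermediate genus-zero vertices'' elides precisely the problematic case. The paper's remedy is to introduce \emph{critical} paths---cross pairs whose interior vertices are all of genus $0$---and observe two things: (i) every cross-pair path contains a critical subpath (walk inward from each end to the first positive-genus vertex of the other color), so the monomial ideal generated by all cross-pair path monomials equals the one generated by critical-path monomials; and (ii) for a critical path each propagation factor is the $\pr^1$ bidifferential $dz\,dw/(z-w)^2$ evaluated at distinct points, hence nonzero, while your base-point-freeness argument handles $\mathbf u,\mathbf v$. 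Thus $\Phi_{uv}(0)\neq 0$ holds for critical cross pairs, those monomials lie in $I$, and together with your divisibility containment this yields $I=\bigl(\prod_{e\in\gamma}t_e\bigr)_{\gamma\ \text{critical}}$. With this refinement inserted, your argument is complete and coincides with the paper's.
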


Theorem \ref{mainthm} is proven in Section \ref{1.1}. We use local series expansions for the Torelli map, derived from \cite{HN}, to find the precise scheme structure of the fiber product of $t$ and $\pi$. 

\subsection{The fiber product of the Torelli map with a general product locus}\label{fibprod}

In order to state the precise local equations for the fiber product $Y$, we first describe the $\C$-points of $Y$. By definition, a $\C$-point is given by a pair $([C],J)$ where $[C]\in \Mct_g$ and ${J=(X_1,\dots,X_k)\in \A_{g_1}\times\dots\times \A_{g_k}}$, together with a choice of isomorphism $$\bt\iota \colon J(C) \hspace{-5pt} \ar[r,shorten >=5pt, shorten <=5pt,"\sim"]&\hspace{-5pt} X_1\times \dots \times X_k.\et$$

\noindent We describe how the $\C$-points $([C],J,\hspace{-5pt}\bt\iota \colon J(C) \hspace{-5pt} \ar[r,shorten >=5pt, shorten <=5pt,"\sim"]&\hspace{-5pt} \pi(J)\hspace{-5pt}\et)$ correspond to tuples $([C],J,\sigma)$, where $$\sigma\colon V(\Gamma_C)_{>0} \to \{1,\dots,k\},$$\noindent for $V(\Gamma_C)_{>0}=\{v\in V(\Gamma_C)|g(v)>0\}$, is a $k$-coloring such that $\sum_{v \in \sigma^{-1}(j)} g(v) = g_j$ for $j=1,\dots,k$. Let $\{C_v\}_{v\in V(\Gamma_C)}$ be the irreducible components of $[C]\in \Mct_g$. Since $C$ is of compact type, we can decompose $J(C) = \prod_{v\in V(\Gamma_C)_{>0}} J(C_v)$. A principally polarized abelian variety has a unique decomposition into a product of indecomposable principally polarized abelian varieties \cite[Corollary 3.23]{CG}. The isomorphism $\iota$ is thus determined by a partition of $V(\Gamma_C)_{>0}$ into sets $S_1,\dots,S_k$ such that $\iota$ induces isomorphisms $\bt \prod_{v\in S_j}J(C_v)\hspace{-5pt} \ar[r,shorten >=5pt, shorten <=5pt,"\sim"]&\hspace{-5pt}X_j \et$ for $j=1,\dots,k$. We associate a $k$-coloring $\sigma$ to $\iota$ via $\sigma(v) = j$ for $v\in S_j$. Moreover, we have $$J = \big([\hspace{1pt}\prod_{v\in \sigma^{-1}(1)}J(C_v)],\dots,[\hspace{1pt}\prod_{v\in \sigma^{-1}(k)}J(C_v)]\big) \in \A_{g_1}\times\dots\times \A_{g_k}.$$ We henceforth denote the $\mathbb{C}$-points of $Y$ by tuples $([C],J,\sigma)$ as above.

\begin{definition}
For $([C], J, \sigma)\in Y$, we say that a path $\gamma$ (with no repeated vertices) of the dual graph $\Gamma_C$ is \textit{critical} if it satisfies the following properties:
\begin{itemize}
    \item The end vertices $v,v'$ of $\gamma$ are of positive genus
    \item The interior vertices of $\gamma$ are of genus $0$
    \item $\sigma(v) \neq \sigma(v')$
\end{itemize}
\noindent We denote the set of critical paths associated to $([C], J, \sigma)\in Y$ by $P_\sigma$.
\end{definition}

Figure \ref{critfig} shows an example of a point $([C], J,\sigma)\in Y$ for $k=2$ with the associated critical paths of $\Gamma_C$.

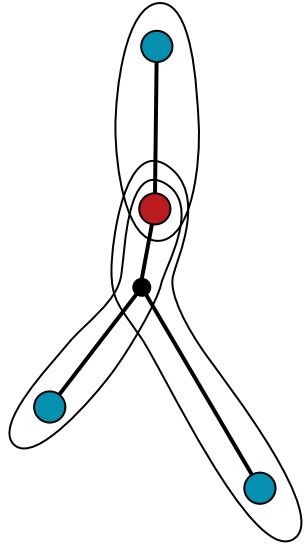
\begin{figure}[h]
\centering
\captionsetup{width=0.8\linewidth}
\scalebox{1}{\tikzset{every picture/.style={line width=0.75pt}} 

\begin{tikzpicture}[x=0.75pt,y=0.75pt,yscale=-1,xscale=1]

\draw [line width=1.5]    (329.29,118.13) -- (322.71,153.42) ;
\draw [line width=1.5]    (322.71,157.71) -- (382.29,259.13) ;
\draw [line width=1.5]    (322.71,157.71) -- (276.29,218.13) ;
\draw  [fill={rgb, 255:red, 0; green, 0; blue, 0 }  ,fill opacity=1 ] (318.42,157.71) .. controls (318.42,155.34) and (320.34,153.42) .. (322.71,153.42) .. controls (325.08,153.42) and (327,155.34) .. (327,157.71) .. controls (327,160.08) and (325.08,162) .. (322.71,162) .. controls (320.34,162) and (318.42,160.08) .. (318.42,157.71) -- cycle ;
\draw [line width=1.5]    (330.29,36.13) -- (329.29,118.13) ;
\draw  [fill={rgb, 255:red, 186; green, 27; blue, 30 }  ,fill opacity=1 ] (321.42,118.13) .. controls (321.42,113.78) and (324.94,110.25) .. (329.29,110.25) .. controls (333.64,110.25) and (337.17,113.78) .. (337.17,118.13) .. controls (337.17,122.47) and (333.64,126) .. (329.29,126) .. controls (324.94,126) and (321.42,122.47) .. (321.42,118.13) -- cycle ;
\draw  [fill={rgb, 255:red, 3; green, 145; blue, 175 }  ,fill opacity=1 ] (322.42,36.13) .. controls (322.42,31.78) and (325.94,28.25) .. (330.29,28.25) .. controls (334.64,28.25) and (338.17,31.78) .. (338.17,36.13) .. controls (338.17,40.47) and (334.64,44) .. (330.29,44) .. controls (325.94,44) and (322.42,40.47) .. (322.42,36.13) -- cycle ;
\draw  [fill={rgb, 255:red, 3; green, 145; blue, 175 }  ,fill opacity=1 ] (268.42,218.13) .. controls (268.42,213.78) and (271.94,210.25) .. (276.29,210.25) .. controls (280.64,210.25) and (284.17,213.78) .. (284.17,218.13) .. controls (284.17,222.47) and (280.64,226) .. (276.29,226) .. controls (271.94,226) and (268.42,222.47) .. (268.42,218.13) -- cycle ;
\draw  [fill={rgb, 255:red, 3; green, 145; blue, 175 }  ,fill opacity=1 ] (374.42,259.13) .. controls (374.42,254.78) and (377.94,251.25) .. (382.29,251.25) .. controls (386.64,251.25) and (390.17,254.78) .. (390.17,259.13) .. controls (390.17,263.47) and (386.64,267) .. (382.29,267) .. controls (377.94,267) and (374.42,263.47) .. (374.42,259.13) -- cycle ;
\draw   (309.42,77.25) .. controls (309.42,58.25) and (314.42,15.25) .. (331.42,14.25) .. controls (348.42,13.25) and (350.42,56.25) .. (351.42,75.25) .. controls (352.42,94.25) and (348.42,133.25) .. (331.42,134.25) .. controls (314.42,135.25) and (309.42,96.25) .. (309.42,77.25) -- cycle ;
\draw   (398.42,285.25) .. controls (377.42,295.25) and (333.42,199.25) .. (322.42,183.25) .. controls (311.42,167.25) and (307.42,164.25) .. (307.42,149.25) .. controls (307.42,134.25) and (314.42,82.25) .. (335.42,96.25) .. controls (356.42,110.25) and (340.42,143.25) .. (338.42,152.25) .. controls (336.42,161.25) and (345.42,178.25) .. (355.42,192.25) .. controls (365.42,206.25) and (419.42,275.25) .. (398.42,285.25) -- cycle ;
\draw   (258.42,237.25) .. controls (247.42,227.25) and (276.42,197.25) .. (285.42,186.25) .. controls (294.42,175.25) and (310.42,165.25) .. (312.42,151.25) .. controls (314.42,137.25) and (315.42,94.25) .. (334.42,105.25) .. controls (353.42,116.25) and (334.42,147.25) .. (332.42,156.25) .. controls (330.42,165.25) and (316.42,189.25) .. (307.42,201.25) .. controls (298.42,213.25) and (269.42,247.25) .. (258.42,237.25) -- cycle ;

\end{tikzpicture}}
\caption{
Critical paths in $\Gamma_C$ associated to $([C],J,\sigma)\in Y$ in the case $k=2$. The red resp. blue vertices correspond to irreducible components $C_v$ of $C$ with $\sigma(v) = 1$ resp. $2$. The black vertices correspond to curves of genus $0$. The encircled paths are the critical paths of $\Gamma_C$.
}\label{critfig}
\end{figure}

In order to describe the scheme structure at a point $([C],J,\sigma)\in Y$, we would like to describe the maps $t$ and $\pi$ in suitable local coordinates. For this purpose, we view $[J(C)]$ and $J$ as elements of $\mathbb{H}_g$ and $\mathbb{H}_{g_1}\times\dots\times \mathbb{H}_{g_k}$ respectively, using a local lift of the Torelli map. The product map $\pi$ is straightforward to describe in local coordinates, see Section \ref{loccoord}. It is more difficult to describe the Torelli map $t$ in terms of the local coordinates $\ul{x},\ul{s}$ near $[C]\in \Mct_g$, where $\ul{x}$ are coordinates corresponding to the boundary stratum associated to $\Gamma_C$, and $\ul{s} = (s_e)_{e\in E(\Gamma_C)}$ are smoothing parameters as in Construction \ref{plumbing}. To do this, we need to be able to extend holomorphic differentials of $C$ over a neighborhood around $[C]$ in $\Mct_g$, and describe how they vary in terms of $\ul{x},\ul{s}$. We do this by adapting results from \cite{HN}. Precisely, given a holomorphic differential $\Omega$ on $C$, Hu and Norton associate a curve $C_{\ul{x},\ul{s}}$ to $\ul{x},\ul{s}$ (for sufficiently small $|\ul{x}|,|\ul{s}|$) and construct an explicit, suitably normalized, holomorphic differential $\Omega_{\ul{x},\ul{s}}$ on $C_{\ul{x},\ul{s}}$ which varies holomorphically in $\ul{x},\ul{s}$ and restricts to $\Omega$ on ${C_{\ul{0},\ul{0}} = C}$. Moreover, they describe a procedure for finding the power series expansion of $\Omega_{\ul{x},\ul{s}}$ with respect to the smoothing parameters $\ul{s}$. Analyzing this procedure and integrating the differentials $\Omega_{\ul{x},\ul{s}}$ over cycles of $H_1(C_{\ul{x},\ul{s}},\Z)$, we find properties of $t$ in the coordinates $\ul{x},\ul{s}$ that allow us to compute the explicit scheme structure at $([C],J,\sigma)$.

We now state a refined version of Theorem \ref{mainthm}, describing the precise local scheme structure for $Y$.

\begin{theorem}\label{refinedthm}
     Let $\ul{x}$ together with $\ul{s}=(s_e)_{e\in E(\Gamma_C)}$ be the coordinates of a small analytic neighborhood of $[C]$ in $\Mct_g$, where $\ul{x}$ correspond to coordinates around $[C]$ in the boundary stratum of $\Mct_g$ associated to $\Gamma_C$, and $s_e$ corresponds to a smoothing of the node associated to the edge $e$ as in Construction \ref{plumbing}. The coordinate ring near $([C],J,\sigma)\in Y$ is given by $$\frac{\C[[\ul{x},\ul{s}]]}{(\prod_{e\in \gamma}s_e)_{\gamma \in P_\sigma}}.$$\noindent In particular, it is reduced.
\end{theorem}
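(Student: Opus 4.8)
The plan is to realize $Y$ locally as the preimage of the product locus under the period map and then to read off its defining ideal from the series expansion of the period matrix supplied by \cite{HN}. First I would fix the $\C$-point $([C],J,\sigma)$ together with an adapted symplectic basis $\{A_i,B_i\}$ of $H_1(C,\Z)$, chosen so that each pair $A_i,B_i$ lies on a single positive-genus component $C_v$. This groups the indices $1,\dots,g$ into blocks indexed by the vertices $v\in V(\G_C)_{>0}$, and into coarser blocks indexed by the colors $1,\dots,k$ via $\sigma$. With respect to such a basis the lift of $\pi$ to $\HH_g$ has image the matrices that are block-diagonal for the color partition, so in the local coordinates $\ul x,\ul s$ the fiber product $Y$ is cut out in $\C[[\ul x,\ul s]]$ by the ideal $I$ generated by the off-color-block entries $\Pi_{ij}(\ul x,\ul s)$, namely those with $i$ on a component $C_v$ and $j$ on a component $C_{v'}$ with $\sigma(v)\neq\sigma(v')$ (entries within a color block are unconstrained, since a whole block maps to the unrestricted $\HH_{g_j}$). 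At $\ul s=\ul 0$ the curve splits and $\Pi$ is block-diagonal for the finer vertex partition, so every such entry vanishes identically along $\ul s=\ul 0$.

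The heart of the argument is to compute the leading $\ul s$-behavior of $\Pi_{ij}$ for $i\in C_v$, $j\in C_{v'}$. Using the Hu--Norton construction, the normalized differential $\omega_i$, supported on $C_v$ at $\ul s=\ul 0$, extends over the smoothed curve by leaking across each plumbed node, and the amount transmitted across an edge $e$ is divisible by $s_e$. Since $\G_C$ is a tree, to reach $C_{v'}$ the differential must cross every edge of the unique path $\delta_{v,v'}$ at least once; propagating along $\delta_{v,v'}$ and integrating over the cycle $B_j$ on $C_{v'}$, I expect the factorization
\[
\Pi_{ij}(\ul x,\ul s)=\Big(\prod_{e\in\delta_{v,v'}}s_e\Big)\cdot h_{ij}(\ul x,\ul s),\qquad h_{ij}\in\C[[\ul x,\ul s]],
\]
where $h_{ij}$ is, to leading order, a product of the values of the relevant differentials at the nodes along $\delta_{v,v'}$ times a period of $C_{v'}$. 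Because $v$ and $v'$ have positive genus, these evaluations can be made nonzero by a suitable choice of $i\in C_v$ and $j\in C_{v'}$, so for each such pair one may arrange $h_{ij}(\ul 0,\ul 0)\neq 0$, i.e. $h_{ij}$ a unit. This factorization and non-vanishing are the only nonformal inputs, and I expect them to be the main obstacle: one must check against the Hu--Norton expansion that the leaked differential across a chain of nodes is exactly divisible by the product of the corresponding $s_e$ and that the leading coefficient does not vanish identically.

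Granting this, set $J=(\prod_{e\in\gamma}s_e)_{\gamma\in P_\sigma}$. The inclusion $I\subseteq J$ follows from the factorization together with a combinatorial observation: listing the positive-genus vertices $v=u_0,u_1,\dots,u_m=v'$ in order along $\delta_{v,v'}$, the colors $\sigma(u_0)\neq\sigma(u_m)$ differ, so some consecutive pair $u_l,u_{l+1}$ has $\sigma(u_l)\neq\sigma(u_{l+1})$; the subpath from $u_l$ to $u_{l+1}$ has positive-genus endpoints, genus-$0$ interior, and distinct colors, hence is a critical path $\gamma\in P_\sigma$ with $\prod_{e\in\gamma}s_e\mid\prod_{e\in\delta_{v,v'}}s_e$. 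Thus each generator $\Pi_{ij}$ lies in $J$. Conversely, for $\gamma\in P_\sigma$ joining $v$ to $v'$ the path $\delta_{v,v'}$ equals $\gamma$, and choosing $i\in C_v$, $j\in C_{v'}$ with $h_{ij}$ a unit gives $\prod_{e\in\gamma}s_e=h_{ij}^{-1}\Pi_{ij}\in I$; hence $J\subseteq I$ and $I=J$.

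Finally, $J$ is generated by squarefree monomials in the $s_e$, so it is radical: it equals the intersection of the primes $(s_e:e\in F)$ as $F$ ranges over the minimal edge sets meeting every critical path, each being prime in $\C[[\ul x,\ul s]]$ with integral-domain quotient. Therefore $\C[[\ul x,\ul s]]/J$ is reduced, which is exactly the asserted local structure. Everything beyond the Hu--Norton factorization is then combinatorics of the tree $\G_C$ and the commutative algebra of squarefree monomial ideals.
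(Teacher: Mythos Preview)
Your proposal is correct and follows essentially the same route as the paper: identify the local ideal of $Y$ with the off-color-block period entries, use the Hu--Norton expansion to factor each such entry as $\big(\prod_{e\in\delta_{v,v'}}s_e\big)\cdot h_{ij}$, observe that every such path contains a critical subpath (giving $I\subseteq J$), use that $h_{ij}$ is a unit along a critical path to get $J\subseteq I$, and conclude reducedness from squarefreeness. The paper isolates the factorization and unit statement as a separate lemma (its Lemma~\ref{lem1}) and carries out the Hu--Norton computation explicitly, but the architecture is the same as yours.

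One small imprecision worth tightening: you assert that for any positive-genus pair $v,v'$ one can choose $i,j$ making $h_{ij}$ a unit, but the leading coefficient of $h_{ij}$ also involves the constants $\beta_{-e_l,e_{l+1}}=\tilde{\boldsymbol b}_{v(-e_l)}(q_{-e_l},q_{e_{l+1}})$ at the \emph{interior} vertices of $\delta_{v,v'}$, and these are only guaranteed nonzero when those vertices have genus~$0$ (so that the bidifferential is $\frac{dw\,dz}{(w-z)^2}$). The paper's lemma states the unit conclusion only under this hypothesis. This does not affect your argument, since you invoke the unit property solely for $\gamma\in P_\sigma$, whose interior vertices are genus~$0$ by definition; but the claim as stated for arbitrary $v,v'$ is not what the Hu--Norton expansion immediately gives.
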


\subsection{Torelli pullbacks of product loci}\label{pullbacks}

Many classes $t^*[\A_{g_1}\times\dots\times \A_{g_k}]\in \mathsf{CH}^*(\Mct_g)$ vanish for dimension reasons. Indeed, we have $\text{dim}(\Mct_g)=3g-3$ and $\text{dim}(\A_g)=\frac{g(g+1)}{2}$. The codimension $d$ of $\pi$ equals $\sum_{1\leq i <j \leq k} g_ig_j$. In other words, we have $$\text{codim}(t^*[\A_{g_1}\times\dots\times \A_{g_k}])= \sum_{1\leq i <j \leq k} g_ig_j.$$ We deduce that $t^*[\A_{g_1}\times\dots\times \A_{g_k}]=0$ for $d>3g-3$. If we additionally know that the class  $t^*[\A_{g_1}\times\dots\times \A_{g_k}]$ is tautological, we find that the class vanishes for $d>2g-3$ since $R^{>2g-3}(\Mct_g) = 0$ \cite{GV}. As an application of Theorem \ref{mainthm}, we prove the following result in Section \ref{a3}:
\begin{theorem}\label{vanishing}
   The class $t^*[\A_{g_1}\times\dots\times \A_{g_k}]\in \mathsf{CH}^*(\Mct_g)$ is tautological for all decompositions $g=g_1+\dots+g_k$. Thus, $t^*[\A_{g_1}\times\dots\times \A_{g_k}] = 0 \in \mathsf{CH}^{d}(\Mct_g)$ for $d > 2g-3.$
\end{theorem}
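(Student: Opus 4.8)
The plan is to compute the Torelli pullback by excess intersection theory on the fiber product $Y$ and to use Theorem \ref{mainthm} to check that every contribution is tautological. First I would dispose of the degenerate cases: if $k=1$ or some $g_i=0$ the class equals $1$, so I assume $k\ge 2$ and all $g_i\ge 1$. In this range the projection $q\colon Y\to\Mct_g$ has image in the boundary $\Mct_g\setminus\M_g$: a coloring $\sigma$ with $\sum_{v\in\sigma^{-1}(j)}g(v)=g_j\ge 1$ for each $j$ requires at least two positive-genus vertices, hence a reducible curve; equivalently, a splitting $J(C)\cong X_1\times\dots\times X_k$ into positive-dimensional factors makes $J(C)$ decomposable, while a smooth curve has indecomposable Jacobian. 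This already signals a boundary --- hence tautological --- answer.

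Next I would set up the refined intersection. The product map $\pi$ is a morphism of smooth Deligne--Mumford stacks, hence lci, with normal bundle $N_\pi=\bigoplus_{1\le i<j\le k}\mathbb{E}_i^\vee\otimes\mathbb{E}_j^\vee$ of rank $d=\sum_{i<j}g_ig_j$, where $\mathbb{E}_j$ denotes the Hodge bundle of the $j$th factor (using $T\A_g\cong\mathrm{Sym}^2\mathbb{E}^\vee$). By the compatibility of the refined Gysin map with proper pushforward in the Cartesian square \cite{Ful}, following the method of \cite{COP}, one has
\[
t^*[\A_{g_1}\times\dots\times\A_{g_k}]=q_*\,\pi^![\Mct_g],\qquad \pi^![\Mct_g]\in\mathsf{CH}_{3g-3-d}(Y).
\]

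Now Theorem \ref{mainthm} enters decisively. Because $Y$ is reduced, Fulton's canonical decomposition of the intersection class \cite{Ful} writes $\pi^![\Mct_g]=\sum_Z\gamma_Z$ as a sum over the reduced irreducible components $Z$ of $Y$, where each $\gamma_Z$ is assembled from Chern classes of $N_\pi|_Z$ and the normal-bundle data of $Z$; were $Y$ nonreduced, these contributions would involve Segre classes of a thickening and would be far harder to control. By Theorem \ref{refinedthm} the components correspond to the minimal transversals of the critical paths $P_\sigma$, and each maps finitely under $q$ onto a closed boundary stratum $\ol{\M}_\Gamma$ of $\Mct_g$. On such a stratum $\mathbb{E}_j$ restricts to the Hodge bundle $\bigoplus_{v:\sigma(v)=j}\mathbb{E}_{C_v}$ of the subcurves colored $j$, so $c(N_\pi|_Z)$ is a polynomial in the $\lambda$-classes of the $C_v$, while the normal directions of $\ol{\M}_\Gamma$ in $\Mct_g$ are node-smoothing lines with classes built from $\psi$-classes. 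Since $\lambda$-classes are tautological and boundary strata carry tautological fundamental classes, the projection formula gives $q_*\gamma_Z\in R^*(\Mct_g)$; summing over $Z$ yields $t^*[\A_{g_1}\times\dots\times\A_{g_k}]\in R^d(\Mct_g)$.

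The vanishing is then immediate: since $R^{>2g-3}(\Mct_g)=0$ \cite{GV} and $d=\sum_{i<j}g_ig_j$, the class is zero whenever $d>2g-3$. I expect the main obstacle to be the third step --- carrying out the excess intersection for the non-embedding lci morphism $\pi$, and for each component $Z$ of higher-than-expected dimension identifying both the excess quotient of $N_\pi$ and the normal bundle of $\ol{\M}_\Gamma$ explicitly in terms of $\lambda$- and $\psi$-classes. It is precisely the reducedness from Theorem \ref{mainthm} that guarantees these component contributions are computed with reduced data and therefore lie in the tautological ring.
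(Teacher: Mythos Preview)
Your argument is correct and follows essentially the same route as the paper: invoke reducedness (Theorem \ref{mainthm}) to run excess intersection on $Y$, identify the excess bundle as $\bigoplus_{i<j}\mathbb{E}_{g_i}^\vee\boxtimes\mathbb{E}_{g_j}^\vee$ and the stratum normal bundles as sums of node-smoothing lines, and conclude that every contribution is a boundary pushforward of a polynomial in $\lambda$- and $\psi$-classes. The only difference is organisational: the paper decomposes over the full stratification $\{Y_{(T,\sigma)}\}$ (so that the normal data are the bundles $N_{Y_{(T,\sigma)}/Y_{(T',\sigma')}}$, handling intersecting components cleanly) and cites a forthcoming result for the precise shape of that decomposition, whereas you phrase it as Fulton's canonical decomposition over irreducible components.
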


We have the following possibilities for $g_1,\dots,g_k$ such that $d\leq 2g-3$, meaning that ${t^*[\A_{g_1}\times\dots\times \A_{g_k}]}$ is possibly nonzero (see Section \ref{a3}):

\begin{proposition}\label{tuples}
The tuples $(g_1,\dots,g_k)$ with $1\leq g_1\leq \dots \leq g_k$ and $\sum_{1\leq i \leq k}g_i = g$ such that $d= \text{codim}(t^*[\A_{g_1}\times\dots\times \A_{g_k}])\leq 2g-3$ are $(1,g-1),(1,1,g-2),(2,g-2)$ and $(3,3)$. 
\end{proposition}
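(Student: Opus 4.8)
The plan is to reduce everything to the elementary identity $\left(\sum_i g_i\right)^2 = \sum_i g_i^2 + 2\sum_{i<j} g_i g_j$, which rewrites the codimension as $2d = g^2 - \sum_i g_i^2$. The inequality $d \le 2g-3$ then becomes $\sum_i g_i^2 \ge g^2 - 4g + 6$, so I am really searching for the partitions of $g$ whose parts are as ``spread out'' as possible. Throughout I assume $k\ge 2$, since a single factor gives the trivial map with $d=0$.

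For $k=2$, writing $d = g_1 g_2$ and $2g-3 = 2(g_1+g_2)-3$, the condition $d \le 2g-3$ is equivalent to $(g_1-2)(g_2-2)\le 1$. With $1\le g_1 \le g_2$ this forces $g_1\in\{1,2\}$ (both of which work for every $g_2$, giving $(1,g-1)$ and $(2,g-2)$), or $g_1=3$ together with $g_2-2\le 1$, i.e. $(3,3)$; for $g_1\ge 4$ the product already exceeds $1$. This disposes of the $k=2$ tuples. For $k=3$ I first treat $g_1=1$: writing the remaining parts as $a\le b$ with $a+b=g-1$, the condition simplifies to $(a-1)(b-1)\le 0$, which (as $a,b\ge 1$) forces $a=1$, yielding $(1,1,g-2)$, where in fact $d=2g-3$ exactly. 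If instead $g_1\ge 2$, I bound $\sum_i g_i^2$ from above by its value at the most unbalanced admissible partition $(2,2,g-4)$ and check that $\sum_i g_i^2 = (g-4)^2+8 < g^2-4g+6$ whenever $g\ge 5$; since $(2,2,g-4)$ already requires $g\ge 6$, no such tuple survives.

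The remaining work, and the one step requiring a genuine (if still elementary) argument, is ruling out all $k\ge 4$. Here I use that for a fixed number of parts $k$ and fixed sum $g$, the quantity $\sum_i g_i^2$ is maximized---equivalently $d$ is minimized---at the majorization-extreme partition $(1,\dots,1,g-k+1)$. For this partition a direct computation gives $d = (k-1)\big(m + \tfrac{k-2}{2}\big)$ with $m=g-k+1$, and the inequality $d\le 2g-3$ reduces to $(k-3)m \le 2k-5-\binom{k-1}{2}$. For $k=4$ the right-hand side equals $0$, forcing $m\le 0$, whereas $m\ge 1$ always; for $k\ge 5$ the right-hand side is negative. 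Hence even the minimizing partition violates $d\le 2g-3$, so no tuple with $k\ge 4$ qualifies.

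Collecting the surviving cases yields exactly $(1,g-1),(1,1,g-2),(2,g-2),(3,3)$, as claimed. The main subtlety is the extremal (majorization) observation that lets me replace the infinite search over $k\ge 4$ partitions by a single check at the most unbalanced partition; everything else is a finite factorization of quadratic inequalities.
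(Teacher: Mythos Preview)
Your proof is correct, and the argument via the identity $2d = g^2 - \sum_i g_i^2$ together with majorization is genuinely different from the paper's route. The paper proceeds by direct case analysis on the smallest part $g_1$ and on $k$, at each stage identifying by hand the tuple that minimizes $d$ (e.g.\ $(1,2,g-3)$, $(1,1,1,g-3)$, $(2,2,g-4)$, $(4,g-4)$) and checking that the resulting lower bound already exceeds $2g-3$. Your reformulation turns the search into the single quadratic constraint $\sum_i g_i^2 \ge g^2-4g+6$, handles $k=2$ by the clean factorization $(g_1-2)(g_2-2)\le 1$, and then disposes of all $k\ge 4$ at once via the majorization-extreme partition $(1,\dots,1,g-k+1)$. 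The paper's approach is slightly more ad hoc but requires no auxiliary observations; yours is more uniform and makes it transparent why the list terminates (convexity of $\sum g_i^2$ does all the work), at the cost of invoking the majorization/Schur-convexity fact that the most unbalanced partition maximizes $\sum g_i^2$. Either way the computation is elementary.
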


In particular, Proposition \ref{tuples} implies that $t^*[\A_{3}\times \A_{g-3}] = 0$ in $\mathsf{CH}^{3g-9}(\Mct_g)$ for $g\geq 7$. 

\subsection{Further directions}
It is possible that the proof of Theorem \ref{mainthm} can be adapted to different settings, which would be interesting to explore. Let $\mathcal{P}\colon \widetilde{\mathcal{R}}_{g+1}\to \A_g$ be the extended Prym map satisfying the assumptions of \cite[Lemma 5.1]{Beauville}. One could adapt our methods for proving Theorem \ref{mainthm}, to investigate the following:
\begin{itemize}
   \item[]\textbf{Question 1:} Is the fiber product of the Prym map $\mathcal{P}\colon \widetilde{\mathcal{R}}_{g+1}\rightarrow \A_{g}$ and the product map $\pi\colon \A_{g_1}\times\dots\times\A_{g_k}\to \A_g$ reduced?
\end{itemize}

For $d>0$, the \textit{Noether-Lefschetz locus} $\widetilde{\text{NL}}_{g,d}\subset \A_g$ is defined as

$$\widetilde{\text{NL}}_{g,d}= \Biggl\{f\colon E \to (X,\theta) \Bigg| \begin{aligned}
&\hspace{5pt} (X,\theta)\in \A_g, E \text{ an elliptic curve,} \\ &\hspace{5pt}\text{$f$ a homomorphism with }\text{deg}(f^*\theta) = d\end{aligned}\Biggr\}.$$

\vspace{5pt}

In \cite{GL}, Greer and Lian prove that the fiber product of the Torelli map $\Mct_{g,1}\to \A_g$ and the map $\widetilde{\text{NL}}_{g,d}\to \A_g$ corresponds to a moduli space of stable maps $\mathcal{M}_{g,1}^{ct,q}(\mathcal{E},d)$ to a varying elliptic curve (see \cite{GL} for notations). This relates the intersection theory of $\A_g$ to the Gromov-Witten theory of these stable maps, which is in turn related to the genus $1$ Gromov-Witten theory of $\text{Hilb}^n(\C^2)$ \cite{ILPT}. To better understand the intersection theory, we would find it useful to answer the following question:
\begin{itemize}
   \item[]\textbf{Question 2:} Given $d>0$, what is the scheme structure of $\mathcal{M}_{g,1}^{ct,q}(\mathcal{E},d)$?
\end{itemize}

\subsection*{Acknowledgements}

I thank Rahul Pandharipande for his interest in this result and its applications. I also thank Samir Canning, Jeremy Feusi and Aitor Iribar López for useful comments. The author is supported by SNF-200020-219369.

\section{Reducedness of the fiber product}\label{redsection}

In Section \ref{loccoord}, we construct suitable analytic neighborhoods and coordinates near ${[C]\in \Mct_g}$, $[J(C)]\in \mathbb{H}_g$ and $J\in \mathbb{H}_{g_1}\times\dots\times \mathbb{H}_{g_k}$. In Section \ref{term}, we introduce relevant terminology for proving Theorem \ref{refinedthm}, mostly adapted from \cite{HN}, \cite{GKN}. In Section \ref{1.1}, we analyze the expansion of $t$ in the given coordinates. We conclude Section \ref{1.1} with a proof of Theorem \ref{refinedthm}.

\subsection{Local coordinates}\label{loccoord}
Let $W$ be an analytic neighborhood around $[J(C)] \in \mathbb{H}_g$ such that $h W \cap W = \emptyset$ for ${h\in \text{Sp}(2g,\mathbb{Z})}$ with $h [J(C)] \neq [J(C)]$. We choose coordinates $\ul{y} = ({y_{i,j}})_{1\leq i \leq j \leq g}$ on $W$ such that the symmetric matrix $\Pi_{\ul{y}}\in W$ associated to $\ul{y}$ has entries ${(\Pi_{\ul{y}})}_{i,j} = y_{i,j}$ for $1\leq i \leq j \leq g$. Denote by $V$ the preimage of $W$ under the lifted map $\pi \colon \mathbb{H}_{g_1}\times\dots\times \mathbb{H}_{g_k} \to \mathbb{H}_{g}$. Let the coordinates on the factor $\mathbb{H}_{g_i}$ be $\ul{x}^{(i)} = (x^{(i)}_{m,n})_{1\leq  m\leq n \leq g_i}$ such that the entries of the associated symmetric matrix $\Pi_{\ul{x}^{(i)}}\in \mathbb{H}_{g_i}$ are given by $(\Pi_{\ul{x}^{(i)}})_{m,n} = x^{(i)}_{m,n}$ for $1\leq m \leq n \leq g_i$. We also use the notation $(\ul{x}^{(i)})_{1\leq i \leq k}$ for the corresponding coordinates on $V$. We can assume that the map $\pi$ is described in these coordinates by
$$(\Pi_{\ul{x}^{(1)}},\dots, \Pi_{\ul{x}^{(k)}})\mapsto 
\begin{pmatrix}
    \Pi_{\ul{x}^{(1)}} &  0 & 0\\
0  & \ddots & 0 \\
0   & 0 & \Pi_{\ul{x}^{(k)}}
\end{pmatrix}_{\textstyle .}
$$

Let $U\subset \Mct_g$ be a sufficiently small analytic neighborhood around $[C]$ with $t(U)\subset W$ so that we can find coordinates $(\ul{x},\ul{s})$ for $U$ as in \cite[Remark 2.4]{HN}:

\begin{itemize}
    \item For each $v\in V(\Gamma_C)$, the coordinates $x_{v,1},\dots, x_{v,3g(v)-3 + n(v)}$ correspond to coordinates in a neighborhood of the pointed curve $[(C_v,q_1,\dots,q_{n(v)})]\in \M_{g(v),n(v)}$, where $q_1,\dots,q_{n(v)}$ are the nodes in $C$ lying on $C_v$. The coordinates $$\ul{x} = (x_{v,1},\dots, x_{v,3g(v)-3 + n(v)})_{v\in V(\Gamma_C)}$$ can be viewed as coordinates for a neighborhood of $[C]$ in the boundary stratum \newline $\Mct_{\Gamma_C} = \prod_{v\in V(\Gamma_C)} \Mct_{g(v),n(v)}$ of $\Mct_g$ associated to $\Gamma_C$.

    \item For each edge $e$ in the dual graph $\Gamma_C$ of $C$, the coordinate $s_e$ corresponds to a smoothing of the associated node as in Construction \ref{plumbing}. We write $\ul{s} = (s_e)_{e\in E(\Gamma_C)}$.
\end{itemize}

\subsection{Terminology}\label{term}

The notation in this section is mostly adapted from \cite{HN} and \cite{GKN}.

For $[C]\in \Mct_g$, consider an oriented edge $e$ in the dual graph $\Gamma_C$. The oppositely oriented edge is denoted $-e$. The source vertex of $e$ is denoted $v(e)$ and corresponds to an irreducible component $C_{v(e)}$ of $C$. We denote by $q_e$ the preimage in $C_{v(e)}$ of the node $q_{|e|}\in C$ associated to $e$. For $v\in V(\Gamma_C)$, the set of directed edges $e$ such that $v = v(e)$ is denoted $E_v$. Let $z_e$ be a local coordinate in a neighborhood $V_e\subset C$ of $q_e$ so that $z_e\colon V_e \to D(0,1)\subset \mathbb{C}$ is an isomorphism and $z_e(q_e) = 0$. We also assume that the $V_e$ are pairwise disjoint.

\begin{construction}\label{plumbing}
    For $[C]\in \Mct_g$ and $\ul{s} = (s_e)_{e\in E(\Gamma_C)}\in \C^{|E(\Gamma_C)|}$ with $|s_e|<1$, the \textit{standard plumbing} $C_{\ul{s}}$ is obtained as follows \cite[Definition 4.1]{GKN}: For each $e\in E(\Gamma_C)$, let $U_e$ be the open $\sqrt{|s_e|}$-neighborhood of $q_e$. Denote the positively oriented boundary of $U_e$ by $\gamma_e$. The map $I_e\colon \gamma_e \to \gamma_{-e}$ is defined by $z_e\mapsto s_e/z_{-e}$. We define $$C_{\ul{s}} = (C\backslash \sqcup_{e\in E(\Gamma_C)} U_e)/(\gamma_e\sim_{I_e} \gamma_{-e})_{e\in E(\Gamma_C)}.$$ The holomorphic structure of $C_{\ul{s}}$ is obtained from that of $C\backslash \sqcup_{e\in E(\Gamma_C)} U_e$ via the identification of variables $z_e =  s_e/z_{-e}$ in the charts $(V_e,z_e), (V_{-e},z_{-e})$.
\end{construction}

\begin{definition}
Given a smooth curve $C'$, a fixed point $q_0\in C'$ and a symplectic basis $\{A_i,B_i\}_{1\leq i \leq g(C')}$ for $H_1(C',\mathbb{Z})$, the \textit{A-normalized Cauchy kernel} $K_{C'}$ is uniquely defined by the following properties:
\begin{itemize}
    \item $K_{C'}(p,q)$ is a meromorphic differential in $p\in C'$, with only simple poles at $p=q$ and $p=q_0$, with residues $\pm \frac{1}{2\pi i}$
    \item $\int_{p\in A_i}K_{C'}(p,q) = 0$ for $i=1,\dots, g(C')$
\end{itemize}
\end{definition}

\begin{example}
    The Cauchy kernel of $\pr^1$ with poles at $p=q$ and $p=\infty$ is given by $$K_{\pr^1}(p,q) = \frac{1}{2\pi i} \frac{dp}{p-q}.$$
\end{example}

\begin{definition}
The \textit{$A$-normalized fundamental bidifferential} $b_{C'}$ is defined by $$b_{C'}(p,q) = 2\pi i\cdot  d_qK_{C'}(p,q),$$ \noindent where $d_q$ denotes the exterior derivative with respect to $q$.
\end{definition}

We denote by $K_v$ resp. $b_v$ the Cauchy kernel resp. fundamental bidifferential associated to an irreducible component $C_v$ of a curve $[C]\in \Mct_g$, where $v\in V(\Gamma_C)$.

The regular part $\mathbf{K}_v$ of $K_v$ on $\sqcup_{e\in E_v} V_e\times \sqcup_{e\in E_v} V_e$ satisfies 

\begin{equation*}
\mathbf{K}_v(z_e,\zeta_{e'}) =
\begin{cases}
    \begin{aligned}
        &K_v(z_e,\zeta_{e'}) \hspace{8pt}&\text{if }& e\neq e'\\
        &K_v(z_e,\zeta_{e'})-\frac{dz_e}{2\pi i (z_e-\zeta_{e'})} \hspace{8pt}&\text{if }& e= e',
    \end{aligned}
\end{cases}
\end{equation*}
\noindent where we use the notations $p\in V_e$ and $z_e = z_e(p)$ for the entries of $K_v, \mathbf{K}_v$ interchangeably, and where $\zeta_e$ is the local coordinate in $V_e$, denoted differently from $z_e$ to distinguish the local coordinates in the different entries of $K_v$.

For $\boldsymbol{b}_{v}(z_e,\zeta_{e'}) = 2\pi i\cdot  d_{\zeta_{e'}}\mathbf{K}_{v}(z_e,\zeta_{e'})$, we have

\begin{equation*}
\boldsymbol{b}_v(z_e,\zeta_{e'}) =
\begin{cases}
    \begin{aligned}
        &b_v(z_e,\zeta_{e'}) \hspace{8pt}&\text{if }& e\neq e'\\
        &b_v(z_e,\zeta_{e'})-\frac{dz_ed\zeta_{e'}}{(z_e-\zeta_{e'})^2} \hspace{8pt}&\text{if }& e= e'.
    \end{aligned}
\end{cases}
\end{equation*}

\noindent We write $b_v(z_e,q_{e'}) = \tilde{b}_v(z_e,q_{e'})dz_e$, where $b_v(z_e,\zeta_{e'}) = \tilde{b}_v(z_e,\zeta_{e'})dz_ed\zeta_{e'}$, for the differential obtained by evaluating the fundamental bidifferential at a node in the second argument, and similarly for $\boldsymbol{b}_v$. For $e,e'\in E_v$, we write $\beta_{e,e'} = \tilde{\boldsymbol{b}}_v(q_e,q_{e'})$.

Let $[C_{\ul{x},\ul{s}}] \in U$ be the curve associated to the coordinates $\ul{x} = (x_{v,1},\dots, x_{v,3g(v)-3+n})_{v\in V(\Gamma_C)}$ and $\ul{s} = ({s_e})_{e \in E(\Gamma_C)}$. Assume that we want to vary a differential $\Omega \in H^0(\omega_{C})$ to obtain a holomorphic differential $\Omega_{\ul{x},\ul{s}} \in H^0(\omega_{C_{\ul{x},\ul{s}}})$ with $\Omega_{\ul{0},\ul{0}} = \Omega$. The methods in \cite{HN} depend analytically on the variables $\ul{x}$, see \cite[Remark 2.4]{HN}. Thus, we can without loss of generality fix $\ul{x}=\ul{0}$ and consider the differentials $\Omega_{\ul{s}} = \Omega_{\ul{0},\ul{s}}$ on the curves $C_{\ul{s}} = C_{\ul{0},\ul{s}}$. The restriction of $\Omega$ to $C_v$ is denoted $\Omega_v$.

For $e\in E(\Gamma_C)$, $r\geq 0$, we inductively define holomorphic differentials $\xi^{(r)}_e$ on $V_e$ by
\begin{equation}\label{xis}
\begin{cases}
\begin{aligned}
    \xi^{(0)}_{e}(z_e) &= \Omega_{v(e)}(z_e) \\
    \xi^{(r)}_{e}(z_e) &= \sum_{e'\in E_v}\int_{\zeta_{e'}\in \gamma_{e'}}\mathbf{K}_{v(e)}(z_e,\zeta_{e'})\cdot I^*_{e'}\xi_{-e'}^{(r-1)}(\zeta_{e'})\ \text{ if } r>0.\\
\end{aligned}
\end{cases}
\end{equation}
That $\xi^{(0)}_{e}(z_e)$ is holomorphic follows since $C$ is of compact type. Write $\xi^{(r)}_{e}(z_e) =  \tilde{\xi}^{(r)}_{e}(z_e)dz_e$ and $\tilde{\xi}_e= \tilde{\xi}^{(0)}_{e}(q_e)$.

The collection of all oriented paths (possibly with repeated edges) of length $r$ in $\Gamma_C$ starting at a vertex $v$ is denoted $L_v^r$. For a path $l = (e_1,\dots,e_r) \in L_v^r$, write $\beta(l)=\prod_{j=1}^{r-1}\beta_{-e_j,e_{j+1}}$ and $s(l)=\prod_{i=1}^{r}s_{e_i}$. Let $|\cdot|_\infty$ denote the norm $|(s_1,\dots,s_r)|_\infty =\text{sup}_{i}|s_i|$.

Given a symplectic basis $\{A_i,B_i\}_{1\leq i \leq g(v)}$ for $H_1(C_{v},\mathbb{Z})$, differentials $\eta_v = \sum_{r\geq 1}\eta_v^{(r)}$ are defined on $C_v\backslash (\sqcup_{e\in E_v} U_e)$ by
\begin{equation}\label{etas}
\eta^{(r)}_{v}(z) = \sum_{e\in E_v}\int_{z_e\in \gamma_e}K_v(z,z_e)\cdot I^*_e\xi_{-e}^{(r-1)}(z_e).
\end{equation}

\noindent The main result of \cite{HN} (Theorem 1.1) constructs the desired differential $\Omega_{\ul{s}}$ from the collection $\{\eta_v\}_{v\in V(\Gamma_C)}$:

\begin{theorem}[Hu, Norton \cite{HN}]\label{Thm1.1}
    The differentials $\Omega_{v,\ul{s}} = \Omega_v + \eta_v$ for $v\in V(\Gamma_C)$ glue to a global holomorphic differential $\Omega_{\ul{s}}$ on $C_{\ul{s}}$. Moreover, the $\eta_v$ are normalized by $\int_{A_i} \eta_v = 0$ for $i = 1,\dots ,g(v)$, giving $\Omega_{\ul{0}} = \Omega$.
\end{theorem}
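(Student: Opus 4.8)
The plan is to realize $\Omega_{\ul s}$ as the solution of the plumbing/gluing problem and to recognize the series \eqref{xis}--\eqref{etas} as the Neumann series of the associated transfer operator. A holomorphic section of $\omega_{C_{\ul s}}$ is the same datum as a collection of holomorphic differentials $\Omega_{v,\ul s}$ on the truncated components $C_v\setminus\sqcup_{e\in E_v}U_e$ whose boundary values match under the plumbing identifications, i.e. for every edge $e$,
\[
\Omega_{v(e),\ul s}\big|_{\gamma_e} \;=\; I_e^*\big(\Omega_{v(-e),\ul s}\big|_{\gamma_{-e}}\big).
\]
Accordingly I would prove three things in turn: (a) the series defining $\eta_v$ converge for $|\ul s|_\infty$ small, so that $\Omega_{v,\ul s}=\Omega_v+\eta_v$ is a well-defined holomorphic differential on each truncated component; (b) these differentials satisfy the matching condition above and have no pole at the Cauchy-kernel basepoint $q_0$, hence glue to a global $\Omega_{\ul s}\in H^0(\omega_{C_{\ul s}})$; and (c) the normalization $\int_{A_i}\eta_v=0$ holds, which at $\ul s=\ul 0$ forces $\Omega_{\ul 0}=\Omega$.

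For (a) I would run a contraction estimate on $M_r:=\max_e\sup_{V_e}|\tilde\xi_e^{(r)}|$. The one subtle input is that the pullback $I^*_{e'}\xi_{-e'}^{(r-1)}$ carries the factor $-s_{e'}/\zeta_{e'}^2\,d\zeta_{e'}$, whose modulus on $\gamma_{e'}$ (where $|\zeta_{e'}|=\sqrt{|s_{e'}|}$) is bounded while the argument $s_{e'}/\zeta_{e'}$ stays in the disk where $\xi_{-e'}^{(r-1)}$ is defined; integrating the bounded regular kernel $\mathbf K_v$ over the circle $\gamma_{e'}$ of length $2\pi\sqrt{|s_{e'}|}$ then contracts the sup-norm by $O(\sqrt{|\ul s|_\infty})$. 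This yields $M_r\le (C\sqrt{|\ul s|_\infty})^{r} M_0$ and geometric convergence of $\sum_r\xi_e^{(r)}$ and, by the same estimate with the full kernel $K_v$, of $\sum_r\eta_v^{(r)}$ in the interior; the boundary values on the $\gamma_e$ are then controlled by the Plemelj decomposition used in the next step.

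The crux is (b), which I would reduce to a single Plemelj identity together with a telescoping. Using $K_v-\mathbf K_v=\frac{dz_e}{2\pi i(z_e-\zeta_e)}$ on the diagonal block and the fact that $I_e^*\xi_{-e}^{(r-1)}$ extends holomorphically to the exterior $|z_e|>\sqrt{|s_e|}$ (because $\xi_{-e}^{(r-1)}$ is holomorphic at $q_{-e}$ and $z_e=\infty$ corresponds to $z_{-e}=0$), the exterior Cauchy formula gives the clean recursion
\[
\eta_{v(e)}^{(r)}(z_e)\big|_{\mathrm{ext}} \;=\; \xi_e^{(r)}(z_e)+I_e^*\xi_{-e}^{(r-1)}(z_e).
\]
Writing $\Xi_e:=\sum_{r\ge0}\xi_e^{(r)}$ and summing over $r$ (with $\Omega_{v(e)}|_{V_e}=\xi_e^{(0)}$), the exterior boundary value becomes $\Omega_{v(e),\ul s}|_{\gamma_e}=\Xi_e+I_e^*\Xi_{-e}$. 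Since the plumbing parameter is attached to the unoriented edge ($s_e=s_{-e}$) and $I_{-e}=I_e^{-1}$, so that $I_e^*I_{-e}^*=\mathrm{id}$, the same formula applied on the other side gives $I_e^*\big(\Omega_{v(-e),\ul s}|_{\gamma_{-e}}\big)=I_e^*\Xi_{-e}+\Xi_e$, which is exactly $\Omega_{v(e),\ul s}|_{\gamma_e}$. Thus the boundary values match on every edge. Holomorphy of $\Omega_{v,\ul s}$ away from the circles is immediate, and the potential pole of $\eta_v^{(r)}$ at $q_0$ vanishes because its residue is proportional to $\oint_{\gamma_e}I_e^*\xi_{-e}^{(r-1)}=\pm\oint_{\gamma_{-e}}\xi_{-e}^{(r-1)}=0$, the latter being the integral of a holomorphic differential over a contractible loop.

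Finally, (c) is formal: each $\eta_v^{(r)}$ is an integral of $K_v(z,\cdot)$ in the first variable, and since $\int_{A_i}K_v(z,\zeta)=0$ for all $\zeta$ we obtain $\int_{A_i}\eta_v^{(r)}=0$ term by term, hence $\int_{A_i}\eta_v=0$; at $\ul s=\ul 0$ the contours $\gamma_e$ degenerate to the points $q_e$ and every $\eta_v^{(r)}$ vanishes, giving $\Omega_{\ul 0}=\Omega$. I expect the main obstacle to be step (b): pinning down the exterior Plemelj/Cauchy reproduction with the correct orientation and sign for $\gamma_e$ under the orientation-reversing map $I_e$ (the $+$ sign in the displayed recursion is forced — the opposite sign would yield $\Phi_e=-\Phi_e$ — so getting it right is essential), together with verifying that the holomorphic-extension hypothesis for $I_e^*\xi_{-e}^{(r-1)}$ and the uniformity of the kernel bounds persist as the necks shrink ($\ul s\to\ul 0$). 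Once the recursion $\eta_{v(e)}^{(r)}|_{\mathrm{ext}}=\xi_e^{(r)}+I_e^*\xi_{-e}^{(r-1)}$ is established, the matching is a one-line telescoping.
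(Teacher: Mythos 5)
The paper contains no proof of this statement---it is imported verbatim as Theorem~1.1 of Hu--Norton \cite{HN}---and the argument in that source is exactly what you reconstruct: solving the jump problem $\Omega_{v(e),\ul{s}}\big|_{\gamma_e}=I_e^*\bigl(\Omega_{v(-e),\ul{s}}\big|_{\gamma_{-e}}\bigr)$ by the Neumann series of iterated Cauchy-kernel integrals, with your exterior-Cauchy recursion $\eta^{(r)}_{v(e)}\big|_{\mathrm{ext}}=\xi^{(r)}_e+I_e^*\xi^{(r-1)}_{-e}$, the telescoping via $I_{-e}=I_e^{-1}$ (using that $s_e$ is attached to the unoriented edge), the residue cancellation at $q_0$ from $\oint_{\gamma_{-e}}\xi^{(r-1)}_{-e}=0$, and the term-by-term $A$-normalization all matching the structure of \cite{HN}. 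Your sketch is correct in all essentials; the only deviations are cosmetic---\cite{HN} runs the convergence argument through $L^2$-bounds on the circles rather than your sup-norm contraction (which works, provided you take the uniform bound on $\mathbf{K}_v$ over a slightly shrunken disk, say $|z_e|\le 1/2$, a routine point), and this is why the paper's Remark after the statement mentions the $L^2$-bound separately.
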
  

\begin{remark}
    Theorem 1.1 in \cite{HN} additionally gives a $L^2$-bound for the differentials $\eta_v$. The $A$-normalization also gives uniqueness of the differentials $\eta_v$ such that $\Omega_v + \eta_v$ glue to a global differential \cite[\S 2.4]{HN}.
\end{remark}

Consider a pair of irreducible components $C_v$, $C_{v'}$ of $C$ for $v,v'\in V(\Gamma_C)$. Let $\{A_{i,\ul{s}}, B_{i,\ul{s}}\}_{1\leq i \leq g(v)}$ be extensions to $H_1(C_{\ul{s}},\mathbb{Z})$ of a symplectic basis $\{A_i,B_i\}_{1\leq i \leq g(v)}$ for $H_1(C_{v},\mathbb{Z})$ as in \cite[\S 2.4, \S 4.2]{HN}. Let $\{\nu_i\}_{1\leq i \leq g(v')}$ be a basis of holomorphic differentials for $C_{v'}$ and let $\{\nu_{i,\ul{s}}\}_{1\leq i \leq g(v')}$ be their extensions to $C_{\ul{s}}$ as in Theorem \ref{Thm1.1}. Denote by $\Pi_{\ul{s}}(v',v)= \Pi_{\ul{s}}(v,v')^T$ the ${(g(v') \times g(v))}$-block of entries of the period matrix $\Pi_{\ul{s}}$ of $J(C_{\ul{s}})$ corresponding to the integrals of $\{\nu_{i,\ul{s}}\}_{1\leq i \leq g(v')}$ over the cycles $\{B_{i,\ul{s}}\}_{1\leq j \leq g(v)}$.

\subsection{Local equations}\label{1.1}
To prove Theorem \ref{refinedthm}, we will make use of the following lemmas:
\begin{lemma}\label{lem2}
    For any given point $p$ on a smooth curve $C$ of positive genus, there is a global holomorphic differential restricting to $1\in \omega_C|_p \cong \C$.
\end{lemma}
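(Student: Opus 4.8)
The plan is to show that the restriction (evaluation) map $H^0(C,\omega_C)\to \omega_C|_p$ is surjective; since the target is one-dimensional, any differential mapping to a nonzero element can be rescaled so as to restrict to $1$. Equivalently, I would exhibit a global holomorphic differential not vanishing at $p$, i.e.\ show that $p$ is not a base point of the canonical linear system.

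First I would write down the short exact sequence of sheaves on $C$ obtained by twisting the structure sequence of the point $p$ by $\omega_C$,
\begin{equation*}
0 \to \omega_C(-p) \to \omega_C \to \omega_C|_p \to 0,
\end{equation*}
and pass to the associated long exact sequence in cohomology. Since $H^0(\omega_C(-p))$ is exactly the kernel of the restriction map, that map $H^0(C,\omega_C)\to \omega_C|_p\cong\C$ is surjective precisely when $h^0(\omega_C(-p)) = h^0(\omega_C)-1 = g-1$, where $g=g(C)\geq 1$.

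It then remains to compute $h^0(\omega_C(-p))$, the dimension of the space of differentials vanishing at $p$. Writing $K$ for a canonical divisor, $\omega_C(-p)$ corresponds to the divisor class $K-p$ of degree $2g-3$. Riemann--Roch gives $h^0(K-p)-h^1(K-p) = (2g-3)-g+1 = g-2$, and Serre duality identifies $h^1(K-p)=h^0(\Ocal_C(p))$. Here the positive-genus hypothesis enters: on a curve of genus $g\geq 1$ one has $h^0(\Ocal_C(p))=1$, since $h^0(\Ocal_C(p))\geq 2$ would produce a nonconstant function with a single simple pole, hence a degree-one map $C\to\pr^1$, forcing $g=0$. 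Therefore $h^0(\omega_C(-p)) = (g-2)+1 = g-1$, which is exactly the required equality, and surjectivity follows.

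There is no serious obstacle here: the statement is precisely the base-point-freeness of the canonical system for curves of positive genus, and the only place the hypothesis $g(C)>0$ is used is in the computation $h^0(\Ocal_C(p))=1$. In the genus-one case one can bypass the cohomology count entirely, since $\omega_C\cong\Ocal_C$ admits a nowhere-vanishing global section, which restricts to a nonzero element of $\omega_C|_p$ at every $p$.
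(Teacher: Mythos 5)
Your proof is correct and takes essentially the same route as the paper: the paper disposes of the lemma in one line by observing it is exactly base-point-freeness of $\omega_C$ for $g(C)\geq 1$, "which follows from the Riemann--Roch theorem," and your argument is precisely that Riemann--Roch computation (via the twisted structure sequence, Serre duality, and $h^0(\Ocal_C(p))=1$) carried out in full detail.
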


Lemma \ref{lem2} equivalently says that $\omega_C$ is basepoint-free for $g(C)\geq 1$, which follows from the Riemann-Roch theorem.

\begin{lemma}\label{lem1}
    Let $v,v'\in V(\Gamma_C)_{>0}$. Label the unique shortest sequence of edges connecting $v$ to $v'$ by $(e_1,\dots,e_r)$. The block $\Pi_{\ul{s}}(v',v)$ consists of entries of the form $s_{e_1}\cdots s_{e_r}\cdot p_i$ for some power series $(p_i)_{1\leq i \leq g(v')\times g(v)}$ in the coordinates of $U$. Moreover, if the remaining vertices on the minimal path between $v$ and $v'$ are of genus $0$, at least one of the $p_i$ will be a unit in the coordinate ring of $U$.
\end{lemma}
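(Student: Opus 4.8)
The plan is to read off the entries of the period block $\Pi_{\ul s}(v',v)$ from the Hu--Norton expansion of Theorem \ref{Thm1.1} and to track the powers of the smoothing parameters $s_e$ they carry. Since $v\neq v'$ and the source differential $\nu_i$ is supported on $C_{v'}$, its restriction to $C_v$ vanishes, so on $C_v\setminus\bigsqcup_{e\in E_v}U_e$ the extension $\nu_{i,\ul s}$ equals the correction term $\eta_v=\sum_{r'\geq 1}\eta_v^{(r')}$ of \eqref{etas}. Choosing representatives of the cycles $B_j$ (and their extensions $B_{j,\ul s}$) that avoid the plumbing neighborhoods $U_e$, I would reduce each entry $\Pi_{\ul s}(v',v)_{ij}=\int_{B_{j,\ul s}}\nu_{i,\ul s}$ to $\int_{B_j}\eta_v$. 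The whole lemma then becomes a statement about the $s$-adic leading behavior of $\eta_v$.

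First I would make the path-sum structure of the recursion \eqref{xis}--\eqref{etas} precise. The pullback $I^*_{e'}$ along the plumbing map $z_{e'}\mapsto s_{e'}/z_{e'}$ contributes exactly one factor $s_{e'}$ per edge traversed, so $\xi^{(r)}_e$ is a sum over oriented walks $l\in L^{r}_{v(e)}$, each term carrying the monomial $s(l)=\prod s_{e_i}$ with coefficient assembled from the factors $\beta(l)=\prod\beta_{-e_j,e_{j+1}}$ and the leading value $\tilde\nu_i$ at the starting node; consequently $\eta^{(r')}_v$ is a sum over length-$r'$ walks from $v'$ to $v$. Because $\Gamma_C$ is a tree, any walk from $v'$ to $v$ must traverse each edge of the unique geodesic $(e_1,\dots,e_r)$ at least once (an odd net number of times) and every other edge an even number of times; hence $s(l)$ is divisible by $s_{e_1}\cdots s_{e_r}$ for every contributing walk. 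This shows each entry of $\Pi_{\ul s}(v',v)$ is of the form $s_{e_1}\cdots s_{e_r}\cdot p_i$ with $p_i\in\C[[\ul x,\ul s]]$, proving the first assertion.

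For the second assertion I would isolate the coefficient of the minimal monomial $s_{e_1}\cdots s_{e_r}$, i.e.\ the value $p_i(\ul 0,\ul 0)$. In a tree the only walk from $v'$ to $v$ of length $r$ is the geodesic $l_0=(e_1,\dots,e_r)$ itself, since any other walk backtracks and has length $\geq r+2$; so the leading coefficient comes solely from $l_0$ and factors, up to a nonzero universal constant, as the product of the boundary value $\tilde\nu_i(q_{e_1})$ on $C_{v'}$, the interior factors $\beta(l_0)=\prod_{j=1}^{r-1}\beta_{-e_j,e_{j+1}}$, and the value $\tilde\omega_j(q_{e_r})$ of the $j$-th normalized differential on $C_v$ extracted from $\int_{B_j}$ of the Cauchy kernel with pole at $q_{e_r}$. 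Each factor can then be made nonzero: by Lemma \ref{lem2} the space $H^0(\omega_{C_{v'}})$ is basepoint-free, so some $\nu_i$ has $\tilde\nu_i(q_{e_1})\neq 0$; likewise the normalized differentials on $C_v$ cannot all vanish at $q_{e_r}$, so some index $j$ contributes nontrivially; and each interior factor $\beta_{-e_j,e_{j+1}}$ is evaluated at two distinct nodes of a genus-$0$ component, where the fundamental bidifferential is the explicit kernel $\frac{dp\,dq}{(p-q)^2}$ computed from the $\pr^1$ Cauchy kernel, which is nonvanishing off the diagonal. Choosing such $i,j$ yields a $p_i$ with nonzero constant term, i.e.\ a unit in $\C[[\ul x,\ul s]]$.

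The main obstacle will be the bookkeeping in the second step: rewriting the Hu--Norton recursion as a genuine sum over walks and verifying that the coefficient of the geodesic monomial factorizes exactly into the claimed boundary and $\beta$-factors with the correct nonzero constant. In particular, checking that the contour integrals in \eqref{xis} and \eqref{etas} produce precisely the values $\beta_{-e_j,e_{j+1}}=\tilde{\boldsymbol b}_{v}(q_{-e_j},q_{e_{j+1}})$ at the interior vertices, via the residue at the node coming from the $I^*$ pole, requires care; and it is exactly here that the genus-$0$ hypothesis on the interior vertices is used, guaranteeing non-vanishing through the explicit $\pr^1$ bidifferential, whereas a positive-genus interior vertex would leave the corresponding factor uncontrolled.
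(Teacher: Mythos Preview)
Your proposal is correct and follows essentially the same route as the paper: both reduce the entries of $\Pi_{\ul s}(v',v)$ to $\int_{B_j}\eta_v$, unwind the Hu--Norton recursion \eqref{xis}--\eqref{etas} as a sum over walks in the tree (the paper packages this as an explicit induction refining \cite[Prop.~3.4]{HN}, which is exactly the bookkeeping you flag in your last paragraph), extract the factor $s_{e_1}\cdots s_{e_r}$ from the fact that every contributing walk contains the geodesic, and verify that the leading coefficient from the unique length-$r$ walk is nonzero via Lemma~\ref{lem2} at the two endpoints and the explicit $\pr^1$ bidifferential $\frac{dw\,dz}{(w-z)^2}$ at the genus-$0$ interior vertices. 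One small orientation slip: with $(e_1,\dots,e_r)$ directed from $v$ to $v'$ as in the statement, the node lying on $C_{v'}$ is $q_{-e_r}$ and the one on $C_v$ is $q_{e_1}$, so your boundary factors should read $\tilde\nu_i(q_{-e_r})$ and $\tilde\nu'_j(q_{e_1})$ rather than as written.
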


\begin{proof} 

Recall the notations $\{A_{i,\ul{s}}, B_{i,\ul{s}}\}_{1\leq i \leq g(v)}$ and $\{\nu_{i,\ul{s}}\}_{1\leq i \leq g(v')}$ from Section \ref{term}. Assume that we vary the differential $\Omega\in H^0(\omega_C)$ which corresponds to $\nu_i$ in $H^0(\omega_{C_{v'}})$ and $0$ in $H^0(\omega_{C_{v''}})$ for every $v''\neq v'$. We first explain why the formula in Proposition 3.4 in \cite{HN}, with \textit{our} choices of $v$, $v'$ and $\Omega$, can be refined to
\begin{equation}\label{refinement}
\begin{cases}
\begin{aligned}
    \eta^{(r')}_{v}(z) = & (-1)^{r'}\sum_{l^\in L_{v}^{r'}}s(l)b_{v}(z,q_{e_1})\beta(l)\tilde{\xi}_{-e_{r'}}\\
    &+ s_1\cdots s_{r}\mathcal{O}(|\ul{s}|_\infty^{r'-r+1})&\text{ if } &r'\geq r\\
    \eta^{(r')}_{v}(z) = & 0 &\text{ if } &r'<r.\\
\end{aligned}
\end{cases}
\end{equation}
Next, we show that we can find $i,j$ such that the integral of $\eta^{(r)}_{v}(z)$ over $B_{j,\ul{s}}$ is a product of $s_{e_1}\cdots s_{e_r}$ and a unit. These facts will together with the expressions $\Omega_{v,\ul{s}} = \Omega_v + \sum_{r'\geq 1}\eta_v^{(r')}$ and $\Pi_{\ul{s}}(v',v)_{i,j} = \int_{B_{j,\ul{s}}} \Omega_{v,\ul{s}}$ give the desired result. 

The relation $\eta^{(r')}_{v}(z) = 0 \text{ if } r'< r$ holds since $\xi^{(0)}_e(z_e)$ is zero away from $v(e) = v'$, and inductively by (\ref{xis}), $\xi^{(r'')}_e(z_e)$ is zero when there are strictly more than $r''$ edges adjoining $v(e)$ to $v'$. The relation for $\eta^{(r')}_{v}(z)$ when $r'\geq r$ can be shown by induction on $r$ as in the proof of Proposition 3.4 in \cite{HN}: We first show, for a fixed $e \in E_{v}$, that
\begin{equation*}
\begin{aligned}
    \xi^{(r')}_{e}(z_e) = (-1)^{r'}\sum_{l^\in L_{v}^{r'}}s(l)\boldsymbol{b}_{v}(z_e,q_{e_1})\beta(l)\tilde{\xi}_{-e_{r'}}
    + s_1\cdots s_{r}\mathcal{O}(|\ul{s}|_\infty^{r'-r+1}).
\end{aligned}
\end{equation*}
For $r=1$, we have 
\begin{equation*}
\begin{aligned}\xi_{e}^{(1)}(z_{e}) &= \int_{\zeta_{e_1}\in \gamma_{e_1}} \textbf{K}_{v}(z_e,\zeta_{e_1}) I^*_{e_1}\xi^{(0)}_{-e_1}(\zeta_{e_1})\\
&=-s_{e_1}\boldsymbol{b}_{v}(z_e,q_{e_1})\tilde{\xi}_{-e_1} + \mathcal{O}(|s_{e_1}|^2)
\end{aligned}
\end{equation*}
\noindent as in \cite[Equation 3.16]{HN}. The result for $\eta_{v}^{(1)}(z)$ is obtained analogously. Assume that $r\geq 2$. By the induction hypothesis, we have
\begin{equation*}
\begin{aligned}
    \xi_{-e_1}^{(r-1)}(z_{-e_1}) =& (-1)^{r-1}\left(\prod_{2\leq i \leq r}s_{e_i}\right)\boldsymbol{b}_{v(-e_1)}(z_{-e_1},q_{e_2})\left(\prod_{2\leq i \leq r-1}\beta_{-e_i,e_{i+1}}\right)\tilde{\xi}_{-e_{r}}\\
    &+ s_2\cdots s_{r}\mathcal{O}(|\ul{s}|_\infty).
\end{aligned}
\end{equation*}
Noting that the term $s_2\cdots s_{r}\mathcal{O}(|\ul{s}|_\infty)$ is a holomorphic differential in $z_{-e_1}$, taking $I^*_{e_1}$ of this term (followed by multiplication by $\mathbf{K}_{v}(z_{e},\zeta_{e_1})$ and integration over $\zeta_{e_1}\in \gamma_{e_1}$) will yield an additional factor $s_1$, as desired. We conclude the induction for $\xi_e^{(r')}(z_e)$ when $r'=r$ by noting, as in \cite[Equation 3.17]{HN}, that
\begin{equation*}
\begin{aligned}
\int_{\zeta_{e_1}\in \gamma_{e_1}}\textbf{K}_{v}(z_e,\zeta_{e_1})I_{e_1}^*(\boldsymbol{b}_{v(-e_1)}(z_{-e_1},q_{e_2})) =-s_{e_1}b_{v}(z_e,q_{e_1})\beta_{-e_1,e_2} + \mathcal{O}(|s_{e_1}|^2).\\
\end{aligned}
\end{equation*}
By considering $\xi_e^{(r-1)}(z_e)$ and using (\ref{etas}), we obtain the desired formula for $\eta^{(r)}_{v_l}(z)$. A similar induction gives the formula for $r'>r$, noting that all paths of length $r'>r$ between $v$ and $v'$ contain the path $(e_1,\dots,e_r)$.

The coefficients $(\beta_{-e_i,e_{i+1}})_{1\leq i \leq r-1}$ are nonzero since they are evaluations of the fundamental bidifferential $\frac{dwdz}{(w-z)^2}$ of $\pr^1$ at two distinct points. Moreover, the coefficient $\tilde{\xi}_{-e_r}$ equals $\tilde{\nu_i}(q_{-e_r})$ where $\nu_i(z) = \tilde{\nu_i}(z)dz$. By Lemma \ref{lem2}, we can choose $i$ such that $\tilde{\nu}_i(q_{-e_r})\neq 0$. Lastly, we can find a homology cycle $B_j$ on $C_v$ such that $\int_{B_j}b_{v}(z,q_{e_1})\neq 0$. Indeed, we have $\int_{B_j}b_{v}(z,q_{e_1}) = \tilde{\nu}'_j(q_{e_1})$ where $\nu'_j$ is the holomorphic differential of $C_v$ dual to $A_j$ \cite[p.28]{HN}. By Lemma \ref{lem2} there is a $j$ such that $\tilde{\nu}'_j(q_{e_1})\neq 0$. For $i,j$ as above, the integral over $B_j$ of the $s_{e_1}\cdots s_{e_r}$-term of (\ref{refinement}) is thus nonzero as desired.\end{proof}

\subsubsection{Proof of Theorem \ref{refinedthm}}
\begin{proof}[Proof of Theorem \ref{refinedthm}]
Let $\bt t^*\colon \C[W] \to \C[U],\et$ $\bt\hspace{-8pt} \pi^*\colon \C[W] \to \C[V]\et$ denote the pullbacks on coordinate rings associated to $t,\pi$. By Lemma \ref{lem1}, the tensor product of the coordinate rings $\C[U]$, $\C[V]$ over $\C[W]$ with respect to $t^*,\pi^*$ is generated by the variables $x_{i,j}$ and $x^{(i')}_{m,n}$ with the following relations:

\begin{enumerate}[label = (\arabic*)]
    \item For each variable $x^{(i')}_{m,n}$ of $\C[V]$ and $y_{i,j}\in \C[W]$ the unique variable satisfying ${\pi^*y_{i,j} = x^{(i')}_{m,n}}$, we have the relation $x^{(i')}_{m,n} = t^*y_{i,j}$.
 
    \item For each pair of vertices $v,v'\in V(\Gamma_C)_{>0}$ such that $\sigma(v)\neq \sigma(v')$, we obtain the relation $s_{e_1}\cdots s_{e_r}=0$ for edges $e_1,\dots,e_r$ as in Lemma \ref{lem1}. Indeed, the path $e_1,\dots,e_r$ contains a critical subpath $e_1',\dots, e_t'$. By Lemma \ref{lem1} we obtain the relation ${s_{e'_1}\cdots s_{e'_t}\cdot \text{unit} = 0}$ and hence $s_{e_1}\cdots s_{e_r}=0$.

\end{enumerate}

Note that, to obtain a minimal set of relations of type $(2)$ in the fiber product, we only need to consider the relations $s_{e_1}\cdots s_{e_r}=0$ for critical paths $(e_1,\dots,e_r)$.

Relations of type (1) eliminate the generators $\ul{x}^{(i')}$ for $1\leq i'\leq k$, while the relations of type (2) are the square-free monomials $\{\prod_{e\in \gamma}s_e\}_{\gamma \in P_\sigma}$. This gives our desired coordinate ring of the fiber product. Since the monomials $\{\prod_{e\in \gamma}s_e\}_{\gamma \in P_\sigma}$ are square-free, the coordinate ring is reduced.\end{proof}

\section{The Torelli pullback of $[\A_{g_1}\times \dots \times \A_{g_k}]$}\label{a3}

\subsection{Strata of the fiber product}

We give a stratification of the fiber product $Y$ (see Section \ref{fibprod}) according to the combinatorial type of the curve $[C]\in \Mct_g$ and the $k$-coloring $\sigma$.

Let $T$ be a stable tree of genus $g$ (see \cite[A.1]{GP}) and $V(T)_{>0} = \{v\in V(T)| g(v)>0\}$. Let
$$\sigma: V(T)_{>0}\to \{1,\dots,k\}$$\noindent be a partition satisfying $\sum_{v \in \sigma^{-1}(j)} g(v) = g_j$ for $j=1,\dots,k$. For each such pair $(T,\sigma)$, we obtain a map $$\varphi_{(T,\sigma)} \colon \Mct_T = \prod_{v\in V(T)}\Mct_{g(v),n(v)}\to Y$$ via the gluing map $\xi_T\colon \Mct_T \to \Mct_g$ and the map
\begin{equation*}
\begin{aligned}
\Mct_T &\to \A_{g_1}\times\dots\times \A_{g_k}\\
[(C_v,q_1,\dots,q_{n(v)})_{v\in V(T)}] &\mapsto ([\hspace{-8pt}\prod_{v\in \sigma^{-1}(1)}\hspace{-8pt}J(C_v)],\dots,[\hspace{-8pt}\prod_{v\in \sigma^{-1}(k)}\hspace{-8pt}J(C_v)]).
\end{aligned}
\end{equation*}

Denote the image of $\varphi_{(T,\sigma)}$ in $Y$ by $Y_{(T,\sigma)}$. The pair $(T,\sigma)$ is uniquely determined by the closed subscheme $Y_{(T,\sigma)}\subset Y$. Containments of the form $Y_{(T,\sigma)}\subset Y_{(T',\sigma')}$ are determined by specializations $T \to T'$ of stable graphs \cite[A.2]{GP} such that the induced map ${f \colon V(T)\to V(T')}$ satisfies $\sigma(v) = \sigma'(f(v))$ for $v\in V(T)_{>0}$. We denote such specializations by ${\phi\colon (T,\sigma) \to (T',\sigma')}$, and refer to $\phi$ as a \textit{$(T',\sigma')$-structure} on $(T,\sigma)$. For a collection $S = \left((T_i,\sigma_i)\right)_{1\leq i \leq n}$, we refer to a collection $\left(\phi_i\colon (T,\sigma)\to (T_i,\sigma_i)\right)_{1\leq i \leq n}$ as an \textit{$S$-structure} on $(T,\sigma)$, denoted $\left((T,\sigma),(\phi_i)_{1\leq i \leq n}\right)$. We say that an $S$-structure $\left((T,\sigma),(\phi_i)_{1\leq i \leq n}\right)$ is \textit{generic} if, for any $S$-structure $\left((T',\sigma'),(\phi'_i)_{1\leq i \leq n}\right)$ and specialization $\phi\colon (T,\sigma)\to (T',\sigma')$ with $\phi'_i\circ \phi = \phi_i$ for $1\leq i \leq n$, the map $\phi$ is an isomorphism.

\begin{proposition}\label{unique}
    Let $S = \left((T_i,\sigma_i)\right)_{1\leq i \leq n}$. For any $S$-structure $\left((T,\sigma),(\phi_i)_{1\leq i \leq n}\right)$, there is a unique generic $S$-structure $\left((T',\sigma'),(\phi'_i)_{1\leq i \leq n}\right)$ and specialization $\phi\colon (T,\sigma)\to (T',\sigma')$ satisfying $\phi'_i\circ \phi = \phi_i$ for $1\leq i \leq n$.
\end{proposition}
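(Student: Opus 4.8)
The plan is to translate ``specialization'' into the language of edge contractions and then exhibit the generic structure as a single canonical contraction. A specialization $\phi_i\colon (T,\sigma)\to (T_i,\sigma_i)$ is recorded by the subset $A_i\subseteq E(T)$ of edges it contracts: the surviving edges $E(T)\setminus A_i$ are identified with $E(T_i)$, the induced map $f_i\colon V(T)\to V(T_i)$ collapses each connected component of the subgraph $(V(T),A_i)$ to a point, and the color-compatibility $\sigma=\sigma_i\circ f_i$ on positive-genus vertices says exactly that $\sigma$ is constant on the positive-genus vertices of each such component. I would first record the routine facts that a contraction of a stable tree is again a stable tree of the same genus, that composition of specializations corresponds to union of contracted edge sets, and that the balancing condition $\sum_{v\in\sigma^{-1}(j)}g(v)=g_j$ is preserved under contraction, so that every intermediate object produced below is a legitimate member of an $S$-structure.

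The candidate generic structure is the contraction of $T$ along the common edge set $A=\bigcap_{i=1}^n A_i$. Here I would check first that the coloring descends: whenever an edge of $A$ joins two positive-genus vertices, those vertices are identified by every $f_i$ and hence share a color, so collapsing $A$ yields a consistent $\sigma'$ on the contracted tree $T'$. Let $\phi\colon (T,\sigma)\to(T',\sigma')$ denote this contraction. Since $A\subseteq A_i$, each $\phi_i$ factors as $\phi_i=\phi'_i\circ\phi$, where $\phi'_i$ contracts the residual set $A_i\setminus A$ regarded inside $E(T')=E(T)\setminus A$. This produces the desired $S$-structure $((T',\sigma'),(\phi'_i))$ together with the factorizing specialization $\phi$.

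Next I would verify genericity of $(T',\sigma')$. Given any specialization $\theta\colon (T',\sigma')\to(T'',\sigma'')$ contracting a set $B\subseteq E(T')$ and through which every $\phi'_i$ factors, the edges of $B$ must be among those contracted by each $\phi'_i$, namely $A_i\setminus A$; hence $B\subseteq\bigcap_i(A_i\setminus A)=(\bigcap_i A_i)\setminus A=\varnothing$, so $\theta$ is an isomorphism. For uniqueness, I would take any generic $S$-structure $((\widetilde T,\widetilde\sigma),(\widetilde\phi_i))$ admitting a specialization $\psi\colon(T,\sigma)\to(\widetilde T,\widetilde\sigma)$ with $\widetilde\phi_i\circ\psi=\phi_i$, and write $A'$ for the edges $\psi$ contracts. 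The factorizations force $A'\subseteq A_i$ for all $i$, so $A'\subseteq A$; conversely, an edge in $A\setminus A'$ would survive in $\widetilde T$ yet be contracted by every $\widetilde\phi_i$, so contracting it would give a nontrivial specialization through which all $\widetilde\phi_i$ factor, contradicting genericity. Thus $A'=A$, and $\widetilde T$ is the contraction of $T$ along exactly $A$, canonically identified with $T'$ compatibly with all the structure maps.

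I expect the only real obstacle to be bookkeeping rather than conceptual: setting up the contraction dictionary carefully enough that ``composition of specializations $=$ union of contracted edges'' holds on the nose, and checking at each contraction step that the coloring remains well-defined on the merged positive-genus vertices. Once that dictionary is in place, the whole proposition reduces to the elementary observation that $A=\bigcap_i A_i$ is the unique maximal edge set common to all the $\phi_i$, and that contracting precisely this set yields the unique most-generic stratum lying in the closure of $Y_{(T,\sigma)}$ and in each $Y_{(T_i,\sigma_i)}$.
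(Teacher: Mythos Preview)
Your proposal is correct and follows exactly the same approach as the paper: the paper's one-sentence proof simply says to contract the edges of $T$ that are contracted by \emph{all} the $\phi_i$, which is precisely your $A=\bigcap_i A_i$. Your write-up is a careful elaboration of that sentence, supplying the factorization, genericity, and uniqueness checks that the paper leaves implicit; the only minor imprecision is that for the coloring to descend you need $\sigma$ constant on the positive-genus vertices of each connected $A$-component (not just on adjacent pairs), but this follows immediately since $A\subseteq A_i$ forces any such component into a single $A_i$-component for every $i$.
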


\begin{proof}
    Given $\left((T,\sigma),(\phi_i)_{1\leq i \leq n}\right)$, the unique generic $S$-structure is obtained by contracting the edges of $T$ which are contracted under the specializations $T\to T_i$ induced by $\phi_i$ for all $1\leq i \leq n$, with the induced coloring and $S$-structure.
\end{proof}

Let $\text{Irr}(Y)$ be the set of irreducible components of $Y$. Each $Z \in \text{Irr}(Y)$ has the form $Y_{(T,\sigma)}$ for some pair $(T,\sigma)$ admitting no $(T',\sigma')$-structure for $(T',\sigma')\neq (T,\sigma)$. For any subset $\mathcal{Z} \subset \text{Irr}(Y)$, write $S_\mathcal{Z} = \{(T,\sigma)|Y_{(T,\sigma)}\in \mathcal{Z}\}$ and let $G_\mathcal{Z}$ be the set of generic $S_\mathcal{Z}$-structures.

\begin{proposition}
    The fiber product of the maps $(\varphi_{(T,\sigma)} \colon\Mct_T\to Y)_{(T,\sigma)\in S_\mathcal{Z}}$ is canonically isomorphic to $\bigsqcup_{((T',\sigma'),(\phi_Z)_{Z\in \mathcal{Z}})\in G_\mathcal{Z}}\Mct_{T'}$.
\end{proposition}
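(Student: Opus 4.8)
The plan is to produce an explicit comparison morphism $\Theta$ from the right-hand side into the fiber product, to show $\Theta$ is a bijection on geometric points via Proposition \ref{unique}, and then to promote this to a scheme isomorphism by a computation on completed local rings fed by the local model of $Y$ in Theorem \ref{refinedthm}. Throughout I write $S_{\mathcal{Z}}=\{(T_Z,\sigma_Z)\}_{Z\in\mathcal{Z}}$ and denote the fiber product by $\prod_{Z}^{Y}\Mct_{T_Z}$.

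First I would build $\Theta$. For a generic structure $((T',\sigma'),(\phi_Z)_{Z\in\mathcal{Z}})\in G_{\mathcal{Z}}$, each specialization $\phi_Z\colon(T',\sigma')\to(T_Z,\sigma_Z)$ contracts edges and so induces a gluing morphism $\xi_{\phi_Z}\colon\Mct_{T'}\to\Mct_{T_Z}$ glueing along the contracted edges. Since $\xi_{T_Z}\circ\xi_{\phi_Z}=\xi_{T'}$ and the colorings are compatible, composing with $\varphi_{(T_Z,\sigma_Z)}$ gives one and the same morphism $\Mct_{T'}\to Y$ for every $Z$, namely $\varphi_{(T',\sigma')}$. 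The universal property of the fiber product then yields a canonical morphism $\Mct_{T'}\to\prod_{Z}^{Y}\Mct_{T_Z}$, and taking the disjoint union over $G_{\mathcal{Z}}$ defines $\Theta$.

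Next I would check that $\Theta$ is bijective on geometric points. A geometric point of the fiber product is a tuple $(p_Z)_{Z\in\mathcal{Z}}$ with common image $([C],J,\sigma)\in Y$; writing $\Gamma$ for the dual graph of $C$, each $p_Z$ is equivalent to a color-preserving specialization $\psi_Z\colon(\Gamma,\sigma)\to(T_Z,\sigma_Z)$ (color preservation being forced by equality of the images in $Y$). Thus the datum is precisely a curve $[C]$ with dual graph $\Gamma$ together with an $S_{\mathcal{Z}}$-structure on $(\Gamma,\sigma)$. Proposition \ref{unique} attaches to it a unique generic structure $((T',\sigma'),(\phi_Z))$ and a factorizing specialization $\psi\colon(\Gamma,\sigma)\to(T',\sigma')$; the curve $[C]$ then defines a point of $\Mct_{T'}$, and this assignment is inverse to $\Theta$ on geometric points.

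The main work, which I expect to be the chief obstacle, is upgrading this to a scheme isomorphism by comparing completed local rings. Fix a geometric point as above with associated generic $(T',\sigma')$, and let $E_Z\subseteq E(\Gamma)$ be the edges surviving under $\psi_Z$, so that the contracted smoothing parameters are free coordinates on $\Mct_{T_Z}$. By Theorem \ref{refinedthm} the completed local ring of $Y$ is $R_Y=\C[[\ul{x},\ul{s}]]/(\prod_{e\in\gamma}s_e)_{\gamma\in P_\sigma}$, and I would first verify that $\varphi_{(T_Z,\sigma_Z)}^*$ realizes $\Mct_{T_Z}$ locally as $R_Y/(s_e:e\in E_Z)$. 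The key point is that every critical path $\gamma\in P_\sigma$ joins two positive-genus vertices of different $\sigma$-color, so, since $\psi_Z$ preserves colors, $\gamma$ cannot be wholly contracted and must contain an edge of $E_Z$; hence each squarefree relation $\prod_{e\in\gamma}s_e$ already lies in $(s_e:e\in E_Z)$ and this quotient is the smooth ring $\C[[\ul{x},(s_e)_{e\notin E_Z}]]$. Consequently the fiber product is locally $R_Y/(s_e:e\in\bigcup_Z E_Z)$. By the description of the generic structure in Proposition \ref{unique} one has $E(T')=\bigcup_Z E_Z$, and the same critical-path argument applied to $\psi$ shows that all defining relations of $Y$ again vanish modulo $(s_e:e\in E(T'))$, so this ring equals the smooth local ring $\C[[\ul{x},(s_e)_{e\in E(\Gamma)\setminus E(T')}]]$ of $\Mct_{T'}$, exactly matching $\Theta$. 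Being a bijection that induces isomorphisms on all completed local rings, $\Theta$ is then an isomorphism. The delicate points to handle with care are the translation between the stacky gluing morphisms and their coordinate descriptions, which I would carry out étale-locally so that automorphisms of the boundary strata are matched correctly, and the verification that the local rings extracted from Theorem \ref{refinedthm} are compatible with fiber products of Deligne–Mumford stacks.
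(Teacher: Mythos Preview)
Your argument is correct and in spirit matches the paper's, whose entire proof reads: ``The fiber product of the maps $(\varphi_{(T,\sigma)})_{(T,\sigma)\in S_\mathcal{Z}}$ can be written as $\bigcup_{((T',\sigma'),(\phi_Z)_{Z\in \mathcal{Z}})\in G_\mathcal{Z}}\Mct_{T'}$. Proposition \ref{unique} ensures that this union is disjoint.'' The common core is Proposition \ref{unique}.

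Where you differ is in rigor rather than strategy. The paper asserts the set-theoretic decomposition and leaves the scheme-theoretic identification implicit; you actually construct the comparison morphism $\Theta$, verify bijectivity on points via Proposition \ref{unique}, and then use the local model of Theorem \ref{refinedthm} to compute completed local rings. Your observation that every critical path of $(\Gamma,\sigma)$ must contain an edge of $E_Z$ (since its endpoints have distinct colors and colors are preserved by $\psi_Z$) is exactly what is needed to see that $R_Y/(s_e:e\in E_Z)$ is the smooth ring $\C[[\ul{x},(s_e)_{e\notin E_Z}]]$, and your identification $E(T')=\bigcup_Z E_Z$ follows directly from the explicit description of the generic structure in the proof of Proposition \ref{unique}. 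This buys you an honest proof of the \emph{scheme-theoretic} isomorphism, whereas the paper's two sentences really only justify the statement at the level of points; the local computation you supply is precisely the content the paper suppresses.
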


\begin{proof}
    The fiber product of the maps $(\varphi_{(T,\sigma)})_{(T,\sigma)\in S_\mathcal{Z}}$ can be written as $\bigcup_{((T',\sigma'),(\phi_Z)_{Z\in \mathcal{Z}})\in G_\mathcal{Z}}\Mct_{T'}$. Proposition \ref{unique} ensures that this union is disjoint. \end{proof}  

Define $\text{Strata}(Y)$ to be the set of closed subschemes $Y_{(T,\sigma)}$ such that $(T,\sigma)$ admits a generic $S_\mathcal{Z}$-structure for some $\mathcal{Z}\subset \text{Irr}(Y)$. For $Y_{(T,\sigma)}\in \text{Strata{(Y)}}$, denote by $Y^\circ_{(T,\sigma)}$ the open subscheme of $Y_{(T,\sigma)}$ which is the complement of the closed subschemes $Y_{(T',\sigma')}\subsetneq Y_{(T,\sigma)}$ with $Y_{(T',\sigma')} \in \text{Strata}(Y)$. The set $$\text{Strata}^\circ(Y) = \{Y^\circ_{(T,\sigma)}| Y_{(T,\sigma)} \in \text{Strata}(Y)\}$$ forms a locally closed stratification of $Y$.

\begin{remark}
    A complete description of the strata of the fiber product $Y$ of the Torelli map with a general product map $\A_{g_1}\times\dots\times \A_{g_k} \to \A_g$ will be provided in \cite{inprep}. A combinatorial stratum will consist of a stable tree $T$ of genus $g$ and a $k$-coloring of $V(T)_{>0}$, such that each edge of $T$ lies on a path between two vertices of different colors where the remaining vertices on the path are all of genus $0$.
\end{remark}

\subsection{Proof of Theorem \ref{vanishing}}

\begin{proof}[Proof of Theorem \ref{vanishing}]
By Theorem \ref{mainthm}, the fiber product $Y$ is reduced. Using excess intersection theory \cite{Ful}, we can thus decompose the class $\alpha = t^*[\A_{g_1}\times\dots\times \A_{g_k}]$ as $$\alpha = \sum_{Y_{(T,\sigma)}\in \text{Strata}(Y)}{\xi_T}_*\alpha_{(T,\sigma)}$$ where $\alpha_{(T,\sigma)}$ is supported on $Y_{(T,\sigma)}$ (see \cite[Theorem 5]{inprep}). Moreover, we can take $\alpha_{(T,\sigma)}$ to be a polynomial in the Chern classes of the normal bundles of $Y_{(T,\sigma)}$ in $Y_{(T',\sigma')}$ associated to $(T,\sigma)\to (T',\sigma')$ and of $\A_{g_1}\times\dots\times \A_{g_k}$ in $\A_g$.

The normal bundles are given by $$N_{\A_{g_1}\times\dots\times \A_{g_k}/\A_g} = S^2(\bigoplus_{1\leq i\leq k}\mathbb{E}_{g_i}^\vee) - \bigoplus_{1 \leq i \leq k} S^2\mathbb{E}_{g_i}^\vee = \bigoplus_{1\leq i < j \leq k} \mathbb{E}_{g_i}^\vee\boxtimes\mathbb{E}_{g_j}^\vee$$ and $$N_{Y_{(T,\sigma)}/Y_{(T',\sigma')}} = N_{\Mct_T/\Mct_{T'}} = \bigoplus_{e\in E(T)\backslash E(T')} T_{h_e}\otimes T_{h'_e}$$\noindent where $h_e$, $h_e'$ are the half-edges of $e$ and $T_{h_e},T_{h_e'}$ are the tangent line bundles associated to $h_e,h_e'$ on the relevant factor of $\Mct_{T}$ \cite[\S 13.3]{ACG}.

Taking Chern classes of these normal bundles, we deduce that the class $\alpha_{(T,\sigma)}$ is tautological on $Y_{(T,\sigma)} = \M_T^{ct}$. Pushing forward under the gluing map, the resulting class is tautological on $\Mct_g$.\end{proof}

Recall from Section \ref{pullbacks} that, for $d = \text{codim}(t^*[\A_{g_1}\times\dots\times \A_{g_k}])$, we have $$t^*[\A_{g_1}\times\dots\times \A_{g_k}]=0 \hspace{5pt}\text{for}\hspace{5pt}d>2g-3.$$
\noindent We now exhibit the possibilities for $g_1,\dots,g_k$ such that $d\leq 2g-3$.

\begin{namedtheorem*}{Proposition \ref{tuples}}
   The tuples $(g_1,\dots,g_k)$ with $1\leq g_1\leq \dots \leq g_k$ and $\sum_{1\leq i \leq k}g_i = g$ such that $d= \text{codim}(t^*[\A_{g_1}\times\dots\times \A_{g_k}])\leq 2g-3$ are $(1,g-1),(1,1,g-2),(2,g-2)$ and $(3,3)$. 
\end{namedtheorem*}

\begin{proof}
When $g_1 = 1$, $k\geq 3$ and $g_2\geq 2$, the codimension $d$ is minimized when $k=3$ and $g_2 =2$. Using $d=\sum_{1\leq i <j \leq k} g_ig_j$, we obtain $d\geq  1(g-1) + 2(g-3) = 3g-7 > 2g-3$. When $g_1 = g_2 = 1$ and $k\geq 4$, $d$ is minimized for $k=4$ and $g_3 = 1$, giving $d \geq 3g-6 > 2g-3$. This leaves the possibilities $(1,g-1)$, $(1,1,g-2)$ for $g_1=1$. When $g_1 \geq 2$, $k\geq 3$, the codimension $d$ is minimized for $k=3$ and $g_1=g_2=2$. We obtain $d\geq 2(g-2) + 2(g-4) = 4g-12 >2g-3$. This leaves the case $(2,g-2)$ for $g_1 =2$. For $g_1 = 3$, $k=2$, we have $d = 3(g-3).$ Thus, the only possibility for $g$ such that $d\leq 2g-3$ is $g=6$. This corresponds to the tuple $(3,3)$. For $g_1\geq 4$, $d$ is minimized when $k=2$ and $g_1 = 4$. Thus $d\geq 4(g-4) > 2g-3$. 
\end{proof}

\bibliographystyle{amsplain}
\bibliography{biblio}

@book {ACG,
    AUTHOR = {Arbarello, Enrico and Cornalba, Maurizio and Griffiths, Phillip A.},
     TITLE = {Geometry of Algebraic Curves {II}},
    SERIES = {Grundlehren der mathematischen Wissenschaften},
    VOLUME = {268}, 
 PUBLISHER = {Springer-Verlag, Berlin},
      YEAR = {2011},
     PAGES = {xxx+963},
      ISBN = {978-3-540-42688-2}, 
       DOI = {10.1007/978-3-540-69392-5},
       URL = {https://doi.org/10.1007/978-3-540-69392-5},
}

@book {BL,
    AUTHOR = {Birkenhake, Christina and Lange, Herbert},
     TITLE = {Complex abelian varieties},
    SERIES = {Grundlehren der mathematischen Wissenschaften},
    VOLUME = {302},
   EDITION = {2nd},
 PUBLISHER = {Springer-Verlag, Berlin},
      YEAR = {2004},
     PAGES = {xii+635},
      ISBN = {3-540-20488-1},
   MRCLASS = {14-02 (14H37 14Kxx 32G20)},
       DOI = {10.1007/978-3-662-06307-1},
       URL = {https://doi.org/10.1007/978-3-662-06307-1},
}

@article {COP,
    AUTHOR = {Canning, Samir and Oprea, Dragos and Pandharipande, Rahul},
     TITLE = {Tautological and non-tautological cycles on the moduli space
              of {A}belian varieties},
   JOURNAL = {Invent. Math.},
  FJOURNAL = {Inventiones Mathematicae},
    VOLUME = {242},
      YEAR = {2025},
    NUMBER = {3},
     PAGES = {659--723},
      ISSN = {0020-9910,1432-1297},
   MRCLASS = {14K10},
  MRNUMBER = {4978177},
       DOI = {10.1007/s00222-025-01367-4},
       URL = {https://doi.org/10.1007/s00222-025-01367-4},
}

@article {CG,
    AUTHOR = {Clemens, C. Herbert and Griffiths, Phillip A.},
     TITLE = {The intermediate {J}acobian of the cubic threefold},
   JOURNAL = {Ann. of Math. (2)},
  FJOURNAL = {Annals of Mathematics. Second Series},
    VOLUME = {95},
      YEAR = {1972},
     PAGES = {281--356},
      ISSN = {0003-486X},
   MRCLASS = {14J10 (14G13 14J05 14K20 14M20 14N99)},
  MRNUMBER = {302652},
MRREVIEWER = {H. Popp},
       DOI = {10.2307/1970801},
       URL = {https://doi.org/10.2307/1970801},
}

@article{NL,
    AUTHOR = {Iribar L\'opez, Aitor}, 
    TITLE = {{N}oether-{L}efschetz cycles on the moduli space of abelian varieties},
    year = {2025},
    JOURNAL= {to appear in Forum of Mathematics, Pi},
    url = {https://arxiv.org/abs/2411.09910}
}

@book {Ful,
    AUTHOR = {Fulton, William},
     TITLE = {Intersection theory},
    SERIES = {Ergebnisse der Mathematik und ihrer Grenzgebiete. 3. Folge. A
              Series of Modern Surveys in Mathematics},
    VOLUME = {2},
   EDITION = {2nd},
 PUBLISHER = {Springer-Verlag, Berlin},
      YEAR = {1998},
     PAGES = {xiv+470},
      ISBN = {3-540-62046-X; 0-387-98549-2},
   MRCLASS = {14C17 (14-02)},
  MRNUMBER = {1644323},
       DOI = {10.1007/978-1-4612-1700-8},
       URL = {https://doi.org/10.1007/978-1-4612-1700-8},
}

@article {GP,
    AUTHOR = {Graber, Tom and Pandharipande, Rahul},
     TITLE = {Constructions of nontautological classes on moduli spaces of
              curves},
   JOURNAL = {Michigan Math. J.},
  FJOURNAL = {Michigan Mathematical Journal},
    VOLUME = {51},
      YEAR = {2003},
    NUMBER = {1},
     PAGES = {93--109},
      ISSN = {0026-2285,1945-2365},
   MRCLASS = {14H10 (14C15 14C25)},
  MRNUMBER = {1960923},
MRREVIEWER = {Jos\'e\ M.\ Mu\~noz Porras},
       DOI = {10.1307/mmj/1049832895},
       URL = {https://doi.org/10.1307/mmj/1049832895},
}

@article {GV,
    AUTHOR = {Graber, Tom and Vakil, Ravi},
     TITLE = {Relative virtual localization and vanishing of tautological
              classes on moduli spaces of curves},
   JOURNAL = {Duke Math. J.},
  FJOURNAL = {Duke Mathematical Journal},
    VOLUME = {130},
      YEAR = {2005},
    NUMBER = {1},
     PAGES = {1--37},
      ISSN = {0012-7094,1547-7398},
   MRCLASS = {14H10 (14C15)},
  MRNUMBER = {2176546},
MRREVIEWER = {Ivan\ S.\ Kausz},
       DOI = {10.1215/S0012-7094-05-13011-3},
       URL = {https://doi.org/10.1215/S0012-7094-05-13011-3},
}

@incollection {vdg,
    AUTHOR = {van der Geer, Gerard},
     TITLE = {Cycles on the moduli space of abelian varieties},
 BOOKTITLE = {Moduli of curves and abelian varieties},
    SERIES = {Aspects Math.},
    VOLUME = {E33},
     PAGES = {65--89},
 PUBLISHER = {Friedr. Vieweg, Braunschweig},
      YEAR = {1999},
      ISBN = {3-528-03125-5},
   MRCLASS = {14C15 (14K10)},
  MRNUMBER = {1722539},
       DOI = {10.1007/978-3-322-90172-9\_4},
       URL = {https://doi.org/10.1007/978-3-322-90172-9_4},
}

@unpublished{inprep,
    AUTHOR = {Canning, Samir and Drakengren, Lycka and Feusi, Jeremy and Holmes, Daniel and Iribar López, Aitor and Nesterov, Denis and Oprea, Dragos and Pandharipande, Rahul and Schmitt, Johannes and Sun, Zheming},
     TITLE = {Torelli loci, product cycles, and the homomorphism conjecture for $\mathcal{A}_g$},
   note = {In preparation},
}

@article {HN,
    AUTHOR = {Hu, Xuntao and Norton, Chaya},
     TITLE = {General variational formulas for abelian differentials},
   JOURNAL = {Int. Math. Res. Not.},
  FJOURNAL = {International Mathematics Research Notices},
    VOLUME = {2020},
    NUMBER = {12},
      YEAR = {2018},
     PAGES = {3540--3581},
      ISSN = {1073-7928},
       DOI = {110.1093/imrn/rny106},
       URL = {https://doi.org/10.1093/imrn/rny106},
}

@article {Beauville,
    AUTHOR = {Beauville, Arnaud},
     TITLE = {Prym varieties and the {S}chottky problem},
   JOURNAL = {Invent. Math.},
  FJOURNAL = {Inventiones Mathematicae},
    VOLUME = {41},
      YEAR = {1977},
    NUMBER = {2},
     PAGES = {149--196},
      ISSN = {0020-9910,1432-1297},
   MRCLASS = {14H15 (30A46)},
  MRNUMBER = {572974},
MRREVIEWER = {M.\ Fried},
       DOI = {10.1007/BF01418373},
       URL = {https://doi.org/10.1007/BF01418373},
}

@misc{GL,
    TITLE={$ d $-elliptic loci and the {T}orelli map},
    YEAR = {2024},
    AUTHOR={Greer, Fran{\c{c}}ois and Lian, Carl},
    NOTE={To appear in Mathematical Research Letters. Available at \url{https://arxiv.org/abs/2404.10826}},
}

@article {CMOP,
    AUTHOR = {Canning, Samir and Molcho, Sam and Oprea, Dragos and Pandharipande, Rahul},
     TITLE = {Tautological projection for cycles on the moduli space of abelian varieties},
   JOURNAL = {Algebraic Geometry},
    VOLUME = {12},
      YEAR = {2025},
    NUMBER = {6},
     PAGES = {736 -- 768}
}

@article {GKN,
    AUTHOR = {Grushevsky, Samuel and Krichever, Igor and Norton, Chaya},
     TITLE = {Real-normalized differentials: limits on stable curves},
   JOURNAL = {Russ. Math. Surv.},
  FJOURNAL = {Russian Mathematical Surveys},
    VOLUME = {74},
    NUMBER = {2},
      YEAR = {2019},
     PAGES = {265--324},
      ISSN = {0042-1316},
       DOI = {10.1070/RM9877},
       URL = {https://doi.org/10.1070/RM9877},
}

@misc{ILPT,
    TITLE={Gromov-{W}itten theory of $\mathsf{Hilb}^n(\mathbb{C}^2)$ and {N}oether-{L}efschetz theory of $\mathcal {A}_g$},
    YEAR = {2025},
    AUTHOR={López, Aitor Iribar and Pandharipande, Rahul and Tseng, Hsian-Hua},
    NOTE={Available at \url{https://arxiv.org/pdf/2506.12438}},
}

Lycka~Drakengren\par\nopagebreak
\textsc{Department of Mathematics, ETH Zürich}\par\nopagebreak
\textsc{Rämistrasse 101, 8092 Zürich, Switzerland}\par\nopagebreak
\textit{E-mail address}: \texttt{lycka.drakengren@math.ethz.ch}
\end{document}